\theoremstyle{plain}
\newtheorem{prop}{Proposition}[section]
\newtheorem*{prop*}{Proposition}
\newtheorem{thm}[prop]{Theorem}
\newtheorem*{thm*}{Theorem}
\newtheorem{coro}[prop]{Corollary}
\newtheorem{lemma}[prop]{Lemma}
\theoremstyle{definition}
\newtheorem*{defi}{Definition}
\theoremstyle{remark}
\newtheorem*{remark}{Remark}
\numberwithin{table}{section}
\DeclareMathOperator{\Gal}{Gal}
\DeclareMathOperator{\Hom}{Hom}
\DeclareMathOperator{\coho}{H}
\DeclareMathOperator{\sha}{III}
\newcommand{\F}{\mathbb F}
\newcommand{\HH}{\mathcal H}
\newcommand{\Om}{{\mathscr{O}}}
\newcommand{\nequiv}{\not\equiv}
\def\<#1>{{\left\langle{#1}\right\rangle}}
\def\Z{{\mathbb Z}}             
\def\Q{{\mathbb Q}}             
\def\set#1{{\left\{{\def\st{\;:\;}#1}\right\}}}
\def\O{{\mathcal{O}}}           
\def\Ol(#1){{\mathop{\O_l}(\id{#1})}}                 
\def\Or(#1){{\mathop{\O_r}(\id{#1})}}                 
\def\Orp(#1){{\mathop{\O_r}(\idp{#1})}}               
\def\Otern(#1){{\mathop{\O^0_r}(\id{a})}}    
\def\id#1{{\mathfrak{#1}}}      
\def\idp#1{{\id{#1}_p}}         
\def\T_#1(#2){{\mathop{\mathscr T}\nolimits_{#1}(\id{#2})}}
\def\TO(#1)_#2(#3){{\mathop{\mathscr T}\nolimits^{#1}_{#2}(\id{#3})}}
\def\A_#1(#2){{\mathop{\mathscr A}\nolimits_{#1}(\id{#2})}}
\def\Ax_#1(#2){{\mathop{\widetilde{\mathscr A}}\nolimits_{#1}(\id{#2})}}
\def\Ad{Ad^0\bar{\rho}}
\def\Adn{Ad^0\rho_n}
\def\rrho{\overline{\rho}}
\def\trho{\tilde{\rho}}
\def\T{\mathcal{T}}
\DeclareMathOperator{\GL}{GL}
\DeclareMathOperator{\SL}{SL}
\DeclareMathOperator{\PSL}{PSL}
\DeclareMathOperator{\PGL}{PGL}
\DeclareMathOperator{\Img}{Im}
\DeclareMathOperator{\Ind}{Ind}
\DeclareMathOperator{\Ker}{Ker}
\begin{document}

\title{Lifting Galois representations to ramified coefficient fields}
\subjclass[2010]{11F33; 11F80}
\keywords{Galois Representations; Modular forms.}
\author{Maximiliano Camporino}
\address{Departamento de Matem\'atica, Facultad de Ciencias Exactas y Naturales, Universidad de Buenos Aires} 
\email{mcamporino@dm.uba.ar}
\thanks{The author was partially supported by a CONICET doctoral fellowship}

\begin{abstract}
  Let $p>5$ be a prime integer and $K/\Q_p$ a finite ramified
  extension with ring of integers $\Om$ and uniformizer $\pi$. Let $n
  >1$ be a positive integer and $\rho_n:G_\Q \to \GL_2(\Om/\pi^n)$ be
  a continuous Galois representation. In this article we prove that
  under some technical hypotheses the representation $\rho_n$ can be
  lifted to a representation $\rho:G_\Q \to \GL_2(\Om)$. Furthermore,
  we can pick the lift restriction to inertia at any finite set of
  primes (at the cost of allowing some extra ramification) and get a
  deformation problem whose universal ring is isomorphic to
  $W(\F)[[X]]$. The lifts constructed are ``nearly ordinary'' (not necessarily
  Hodge-Tate) but we can prove the existence of ordinary modular
  points (up to twist).
\end{abstract}

\maketitle

\section{Introduction}

The present article is a continuation of the work done in
\cite{arxiv}, where we constructed, for a finite field $\F$, lifts of
representations $\rho_n:G_\Q \to \GL_2(W(\F)/p^n)$ to
$\GL_2(W(\F))$. Here we prove how to extend the results to finite
ramified extensions $K/\Q_p$ of ramification degree $e$.

The method used in \cite{arxiv} followed the ideas of \cite{ravi2} and
\cite{ravi3}, adapted to the modulo $p^n$ setting. As noticed in
\cite{arxiv} (see the remark before Proposition 5.9) these methods do
not generalize to representations $\rho_n:G_\Q \to \GL_2(\Om/\pi^n)$
where $\Om$ is the ring of integers of $K$. 

The obstacle is that the modulo $\pi^2$ reduction of $\rho_n$ (which we
denote $\rho_2$) fixes the same field extension of $\Q$ as an element
$f \in \coho^1(G_\Q,\Ad)$ (where $\Ad$ isthe adjoint representation of 
the reduction mod $\pi$ of $\rho_n$). This implies that whenever we define
a local condition for deformations containing $\rho_n$ at a prime $v$ it
automatically contains $\rho_2$ and therefore $f$ lies inside its tangent space.
On this way, no matter which set of primes $M$ we choose, the morphism of Theorem $A$ of 
\cite{arxvv}

\begin{equation}
\label{eq:iso}
\coho^1(G_M,\Ad) \to \bigoplus_{v \in M} \coho^1(G_v,\Ad)/N_v
\end{equation}

will always have non-trivial kernel and therefore will never be an isomorphism (as required in \cite{arxiv}).


The key innovation of this work is to relax local conditions so that the morphism
(\ref{eq:iso}) is no longer an isomorphism, but a surjective map with
one dimensional kernel.  This will be enough for the lifting purpose,
since it allows to find global elements that make the required the local adjustments at each
step. We cannot relax conditions at primes $v \neq p$, since the
local deformation ring of $\rrho|_{G_v}$ should not have a smooth
quotient of dimension bigger than $\dim \coho^1(G_v,\Ad) - \dim
\coho^2(G_v,\Ad)$. Therefore, we need to impose a different local
condition at the prime $p$. The condition we impose is the same as in \cite{CM}.

\begin{defi}
 We say that a deformation is ``nearly ordinary'' if its restriction to the inertia subgroup is upper-triangular and its 
 semisimplification is not scalar, i.e. if
 $$\rho|_{I_p} = \begin{pmatrix}
                  \psi_1&*\\
                  0&\psi_2
                 \end{pmatrix}.
 $$ with $\psi_1 \neq \psi_2$.
\end{defi}

Using this local condition at $p$, we are able to derive a slightly weaker version of the following theorem (see Theorem \ref{abstract}
for the precise statement), which is one of the main results of this work. 

\begin{thm*}
  Let $\rho_n: G_\Q \to \GL_2(\Om/\pi^n)$ be a continuous
  representation which is odd and nearly ordinary at $p$.  Assume that
  $\Img(\rho_n)$ contains $\SL_2(\Om/p)$ if $n \ge e$ and $\SL_2(\Om/\pi^n)$ otherwise. Let $P$ be a set of primes of $\Q$ containing
  the ramification set of $\rho_n$.  For each $v \in P \backslash
  \set{p}$ fix a local deformation $\rho_v: G_v \to \GL_2(\Om)$ that
  lifts $\rho_n|_{G_v}$.  Then there exists a continuous
  representation $\rho: G_\Q \to \GL_2(\Om)$ and a finite set of
  primes $R$ such that:
 \begin{itemize}
  \item $\rho$ lifts $\rho_n$, i.e. $\rho \equiv \rho_n \pmod{\pi^n}$.
  \item $\rho$ is unramified outside $P \cup R$.
  \item For every $v\in P$, $\rho|_{I_v} \simeq \rho_v|_{I_v}$.
  \item $\rho$ is nearly ordinary at $p$.
  \item All the primes of $R$, except possibly one, are not congruent to $1$ modulo $p$.
 \end{itemize}
\end{thm*}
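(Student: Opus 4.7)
The plan is to follow the Ramakrishna-style iterative lifting strategy adapted to the ramified coefficient setting, using the nearly ordinary condition at $p$ to engineer the one-dimensional kernel for (\ref{eq:iso}) promised in the introduction. I would lift $\rho_n$ successively to $\rho_{n+k}\colon G_\Q \to \GL_2(\Om/\pi^{n+k})$ and pass to the inverse limit to obtain $\rho\colon G_\Q \to \GL_2(\Om)$.

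First, at each $v \in P \setminus \set{p}$ I would take the local deformation condition determined by the prescribed lift $\rho_v$; at $p$ I would use the nearly ordinary condition defined above; at auxiliary primes $w$ (to be assembled into a set $R$) I would use Ramakrishna's trivial prime condition, typically requiring $w \not\equiv 1 \pmod p$ and $\rrho(\Frob_w)$ to have eigenvalues of ratio $w$, which yields a smooth local deformation ring of the correct tangent dimension. Let $\N=(N_v)$ denote the resulting system of tangent spaces. The essential accounting is via the Greenberg--Wiles/Poitou--Tate formula: the nearly ordinary condition contributes exactly one extra dimension to $N_p$ beyond the tight ordinary condition, and this surplus is what forces (\ref{eq:iso}) to have a one-dimensional kernel rather than being an isomorphism.

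The inductive core consists of choosing Ramakrishna primes $w \in R$ by Chebotarev density so that, simultaneously, the dual Selmer group $\coho^1_{\N^\perp}(G_{P\cup R},\Adr)$ vanishes (which kills the lifting obstruction in $\coho^2$) and the localization map (\ref{eq:iso}) becomes surjective. Once these two conditions hold, a set-theoretic lift from $\Om/\pi^{n+k}$ to $\Om/\pi^{n+k+1}$ exists; its failure to meet the local conditions defines an element of $\bigoplus_v \coho^1(G_v,\Ad)/N_v$, which by surjectivity can be cancelled by a global cocycle, yielding the next deformation. Iterating and taking the inverse limit produces $\rho$ with the required properties. The clause that at most one prime of $R$ is $\equiv 1 \pmod p$ reflects that standard Ramakrishna primes are deliberately chosen outside that congruence class; the possible exception comes from one extra prime introduced to absorb the freedom in the one-dimensional kernel and fix the deformation problem so that its universal ring is $W(\F)[[X]]$.

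The main obstacle is producing, at every stage of the induction, auxiliary primes that satisfy the Ramakrishna condition modulo $\pi^{n+k}$ rather than only modulo $p$. This is precisely where the large image hypothesis enters: the split between assuming $\SL_2(\Om/p) \subset \Img(\rho_n)$ when $n \ge e$ and the stronger $\SL_2(\Om/\pi^n) \subset \Img(\rho_n)$ when $n < e$ is exactly what is needed to control the image of $\rrho$ on the extension cut out by a dual Selmer class, and hence to guarantee via Chebotarev that primes with the desired Frobenius behaviour exist. Balancing the relaxed local condition at $p$ against the rigid local conditions at the remaining primes of $P$, so that the kernel of (\ref{eq:iso}) stays exactly one-dimensional at every lifting step, is the most delicate bookkeeping in the argument.
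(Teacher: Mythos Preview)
Your outline captures the broad Ramakrishna strategy and correctly identifies the relaxed nearly ordinary condition at $p$ as the source of the one-dimensional kernel in (\ref{eq:iso}). However, there are two genuine gaps relative to the paper's proof.

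First, you treat the inductive lifting as uniform in the exponent, but the paper cannot do this. The local data $(C_v,N_v)$ at primes $v\in P\setminus\{p\}$ produced in Section~2 only preserve reductions modulo $\pi^m$ once $m\ge\alpha$ for a threshold $\alpha$ depending on the chosen $\rho_v$ (Proposition~\ref{localtypes}). Below $\alpha$ the subspaces $N_v$ need not preserve $C_v$, so your ``adjust by a global cocycle landing in $N_v$'' step is not available. The paper therefore runs a separate argument (Section~4.1, following \cite{ravilarsen}) to bootstrap from level $n$ up to level $\alpha$: at each stage one adds a fresh finite set of nice primes (Propositions~\ref{surj} and~\ref{fix}, culminating in Theorem~\ref{lifta}), and only \emph{after} reaching $\pi^\alpha$ does the Ramakrishna-type argument with fixed $(C_v,N_v)$ and the one-dimensional kernel apply (Section~4.2). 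Your proposal omits this two-phase structure entirely.

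Second, you misidentify the role of the exceptional prime. The one-dimensional kernel of (\ref{eq:iso}) is generated by the specific class $f\in\coho^1(G_\Q,\Ad)$ attached to $\rho_n\pmod{\pi^2}$; this kernel is \emph{not} removed, and it is precisely what makes the universal ring $W(\F)[[X]]$ rather than $W(\F)$. The possible prime $\equiv 1\pmod p$ is the ``special prime'' of Section~4.1: nice primes $q$ always satisfy $f|_{G_q}=0$, so if $f\in\sha^1_P(\Ad)$ one needs a prime $q\equiv 1\pmod{\pi^n}$ with $\rho_n(\sigma_q)=\left(\begin{smallmatrix}1&\pi\\0&1\end{smallmatrix}\right)$ to force $f|_{G_q}\neq 0$ and thereby kill $\sha^1$ (Lemma~\ref{nice} and the lemma following it). It has nothing to do with absorbing the kernel.
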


In fact, the method provides us not only a lift of $\rho_n$ to $\Om$ but a family of lifts to 
characteristic $0$ rings, parametrized by a lift to the coefficient ring $W(\F)[[X]]$ (see Theorem
\ref{abstract2}). The downside is that this family of representations
is not ordinary but nearly ordinary, which implies that
most points are not Hodge-Tate (in particular not modular). However,
the freedom of the coefficient ring allows us to prove the existence
of modular points, which is the second main result of the present
article (see Theorem~\ref{modular} for the precise statement).

\begin{thm*}
 Let $p$ be a prime, $\Om$ the ring of integers of a finite extension $K/\Q_p$ with ramification degree $e > 1$ and $\pi$ its local uniformizer.
 Let $\rho_n: G_\Q \to \GL_2(\Om/\pi^n)$ be a continuous representation satisfying
 \begin{itemize}
  \item $\rho_n$ is odd.
  \item $\Img(\rho_n)$ contains $\SL_2(\Om/p)$ if $n \ge e$ and $\SL_2(\Om/\pi^n)$  otherwise.
  \item $\rho_n$ is ordinary at $p$.
 \end{itemize}
 
 Let $P$ be a set of primes containing the ramification set of
 $\rho_n$, and for each $v \in P$ pick a local deformation
 $\rho_v:G_v\to \GL_2(\Om)$ lifting $\rho_n|_{G_v}$. Then there exists
 a finite set of primes $Q$ and a continuous representation $\rho:
 G_{P\cup Q} \to \GL_2(\Om)$ such that
 
 \begin{itemize}
  \item $\rho$ lifts $\rho_n$, i.e. $\rho \equiv \rho_n \pmod{\pi^n}$.
  \item $\rho$ is modular.
  \item For every $v \in P$, $\rho|_{I_v} \simeq \rho_v|_{I_v}$.
  \item For every $q \in Q$ $\rho|_{I_q}$ is unipotent and all but possibly one prime of $Q$ satisfy $q \neq \pm 1 \pmod{p}$ .
  \item $\rho$ is ordinary at $p$.
 \end{itemize}
\end{thm*}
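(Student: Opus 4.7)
The plan is to bootstrap modularity from the abstract lifting theorem by constructing a family of nearly ordinary lifts of $\rho_n$ and then specializing that family to a classical ordinary point at which a known modularity lifting theorem applies.

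\textbf{Step 1: Produce a nearly ordinary family.} I apply Theorem~\ref{abstract2} to $\rho_n$ and the prescribed local data to obtain a continuous representation
\[
\rho_\infty: G_{P \cup Q} \longrightarrow \GL_2(W(\F)[[X]])
\]
(with coefficients enlarged as necessary to incorporate $\Om$) that lifts $\rho_n$, is nearly ordinary at $p$, matches the prescribed inertia at each $v \in P$, and has unipotent ramification at the auxiliary primes of $Q$, all but possibly one of which satisfy $q \not\equiv \pm 1 \pmod{p}$. The near-ordinary structure at $p$ produces characters $\Psi_1, \Psi_2$ with
\[
\rho_\infty|_{I_p} = \begin{pmatrix} \Psi_1 & * \\ 0 & \Psi_2 \end{pmatrix}, \qquad \Psi_1 \neq \Psi_2,
\]
and $\Psi_2$ extends to a character of $G_p$ through the triangular decomposition of $\rho_\infty|_{G_p}$.

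\textbf{Step 2: Find an ordinary specialization.} I seek $x \in \Om$ with $x \equiv 0 \pmod{\pi^n}$ such that $\rho := \rho_\infty|_{X=x}$ is classically ordinary, meaning the extended $G_p$-character $\Psi_2|_{X=x}$ differs from an unramified character by a fixed power of the cyclotomic character. Inside the one-parameter formal family over $W(\F)[[X]]$, ordinarity is cut out by a single analytic (Selmer-type) equation in $X$. Because the free parameter $X$ produced by Theorem~\ref{abstract2} was constructed independently of the residual congruence, this equation admits a root $x$ in the residue disc $X \equiv 0 \pmod{\pi^n}$, possibly after a harmless enlargement of $\Om$.

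\textbf{Step 3: Invoke a modularity lifting theorem.} The large-image hypothesis forces $\bar\rho$ to be absolutely irreducible with image containing $\SL_2(\F)$, and oddness together with Khare-Wintenberger's proof of Serre's conjecture then implies that $\bar\rho$ is modular. The specialization $\rho$ is odd, ordinary at $p$ with distinct diagonal characters, Hodge-Tate of classical weight, unramified outside $P \cup Q$, and residually modular; an ordinary modularity lifting theorem in the spirit of Skinner-Wiles (or its Kisin/Geraghty refinements) then gives that $\rho$ itself is modular. All remaining local conditions of the theorem are automatic from the construction of $\rho_\infty$.

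\textbf{Main obstacle.} The hard step is Step~2: producing a specialization $x$ that simultaneously lies in the $\pi^n$-congruence class of $X=0$, realizes the prescribed inertia types at $P$, and satisfies the ordinarity equation at $p$. This compatibility rests essentially on the fact, furnished by Theorem~\ref{abstract2}, that the universal deformation ring of the relaxed global problem is free of rank one over $W(\F)$, so that the ordinarity locus meets each $\pi^n$-residue disc nontrivially; verifying that this intersection survives the classical (Hodge-Tate) constraint, and not merely the nearly ordinary one, is where the bulk of the work will lie.
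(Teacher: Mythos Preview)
Your overall architecture matches the paper: build a one-parameter nearly ordinary family via Theorem~\ref{abstract2}, specialize to a classical ordinary point, and then apply an ordinary modularity lifting theorem (Skinner--Wiles). Where you diverge from the paper, and where there is a genuine gap, is Step~2.

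You assert that classical ordinarity is cut out by ``a single analytic (Selmer-type) equation in $X$'' and that this equation ``admits a root $x$ in the residue disc $X\equiv 0\pmod{\pi^n}$''. Neither claim is justified. The condition that $\Psi_2|_{I_p}$ be an \emph{integer} power of $\chi$ times a finite-order character is not an analytic closed condition on the disc; it is a countable union of such conditions (one for each integer weight), and there is no a priori reason any of them meets the $\pi^n$-residue disc. Your remark that ``$X$ was constructed independently of the residual congruence'' does not supply a mechanism.

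The paper resolves this by passing through Hida theory. First, twist each nearly ordinary specialization $\rho_\gamma$ by the inverse of its $(2,2)$-character on inertia, producing an \emph{ordinary} deformation of $\bar\rho$ with arbitrary determinant. Diamond--Taylor guarantees that $\bar\rho$ has a classical ordinary modular lift, so after this twist the family sits inside a Hida family $\mathcal{H}$. The set of $\gamma$ inducing lifts of $\rho_n$ is an open disc $\mathcal{U}\subset\mathcal{M}_\Om$; the composite of the (nonconstant, analytic) map $\mathcal{U}\to\mathcal{H}$ with the weight map $w:\mathcal{H}\to W$ has open image, and any open in weight space contains classical weights. A preimage of such a weight gives the desired classical ordinary lift of $\rho_n$. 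This openness-of-the-weight-map argument is the substantive content you are missing; without it, Step~2 remains a wish rather than a proof.
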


The strategy to prove both theorems is similar to the one in
\cite{ravi3} and \cite{arxiv}. We will construct, for each prime $v\in
P$, a set of deformations of $\rho_n|_{G_v}$ to $\Om$ which contains
$\rho_v$ and a subspace $N_v$ preserving its reductions in the sense
of Proposition \ref{localtypes}. Also, for $v = p$, we take $C_p$ to
be the set of nearly ordinary deformations, which will give a larger
subspace $N_p$ and therefore a smaller codomain for the morphism (\ref{eq:iso}). 
Given this local setting, and after some
manipulation of the groups appearing in (\ref{eq:iso}) (and its
analogue for $\coho^2(G_T,\Ad)$) we are able to make such map
surjective (with a $1$-dimensional kernel) and the corresponding map 
for $\coho^2(G_T,\Ad)$ injective. This implies that, with enoguh local conditions,
the problem of lifting $\rrho$ is unobstructed, which gives Theorem \ref{abstract2}.

The tricky part here is that, differently from what happens in
\cite{arxiv}, in some cases the subspaces $N_v$ preserve the
reductions modulo $\pi^n$ of the elements of $C_v$, not for all $n$
but for $n$ bigger than a certain integer $\alpha$. To overcome this
situation we will, following the ideas of \cite{ravilarsen}, lift by
adding one set of auxilliary primes for each power of $\pi$, until we
reach the lift modulo $\pi^{\alpha}$ for the main method to work. In
this way we will get Theorem \ref{abstract}.

Theorem \ref{modular} follows from studying the possible modular points appearing in the universal ring provided by Theorem \ref{abstract2}.
Notice that we will obtain a modular lift of $\rho_n$ each time we find a nearly ordinary lift $\rho$ such that the characters appearing
in the diagonal of $\rho|_{I_p}$ can be written as an integral power of the cyclotomic character times a character of finite order.

We would like to remark that the method employed for the proof of Theorem \ref{abstract}
seems to generalize well to base fields other than $\Q$. However, the more interesting question of 
getting an analog to Theorem \ref{modular} seems to be of higher difficulty. We currently have 
some work in progress towards this direction. As a final remark, the results obtained in this
work have overlap with the ones in \cite{KR} (in particular Theorem \ref{abstract} is analog to Theorem $11$ of \cite{KR}). 
Both works were independent in their first versions and in the present one we applied some of their
results in order to remove hyphoteses. Also, the methods of section $5$ of \cite{KR} allow one
to get the stronger Corollary \ref{cor}.

The article is organized as follows: Section 2 concerns with the
construction of the sets $C_q$ and subspaces $N_q$ for primes $q\neq
p$. In Section 3 we do the same for the nearly ordinary condition at
$p$. Section 4 treats global arguments for lifting and
proves the first main theorem of this work. It is divided into two
subsections, one for exponents at which we have $C_q$ and $N_q$
for every $q\in P$ and one for the ones at which we do not. In Section
5 we prove the other main theorem which concerns modularity.

\subsection{Notation and conventions} In this article $p$ will denote
a rational prime, $\Om$ the ring of integers of a ramified finite
extension $K/\Q_p$, $\pi$ its local uniformizer and $e$ its
ramification degree. We will denote the residual field
$\Om/\pi$ by $\F$. For a prime $q$ we denote by $G_q$ the local absolute
Galois group $\Gal(\overline{\Q_q}/\Q_q)$ and $\sigma$ and $\tau$
stand for a Frobenius element and a generator of the tame inertia
group of $G_q$.  We denote
\[
e_1 = \begin{pmatrix} 1&0\\0&-1
                     \end{pmatrix}, \hspace{0.2cm} e_2 = \begin{pmatrix}
                      0&1\\0&0
                     \end{pmatrix}, \hspace{0.2cm} \text{and} \hspace{0.2cm} e_3 = \begin{pmatrix}
                      0&0\\1&0
                     \end{pmatrix}
\]
which form a basis for the space of $2\times 2$ matrices with trace
$0$. Also, given a prime $q$, $d_i$ will stand for $\dim \coho^i(G_q,\Ad)$ for $i = 0,1,2$.
The cyclotomic character will be denoted by $\chi$. All our deformations will have fixed 
determinant. By $\rho_n:G_\Q \to \GL_2(\Om/\pi^n)$ we denote a continuous Galois
representation, $\rrho:G_\Q \to \GL_2(\F)$ denotes its residual
representation, and by $\rho$ we denote a representation with image in
$\GL_2(\Om)$. Finally $v$ stands for the valuation in
$\Om$ satisfying $v(\pi) = 1$.

For the definitions and main results of deformation theory, we refer to 
\cite{mazur}.
                     
\subsection{Acknowledgments} We would like to thank Ariel Pacetti for
many useful conversations, and to Ravi Ramakrishna for many remarks
and suggestions in a draft version of the present article.

%
%

\section{Local deformation theory at $q\neq p$}

Let $q\neq p$ a prime and let $\rho: G_q \to \GL_2(\Om)$ be a continuous representation. We
denote by $\rrho$ its reduction mod $\pi$. We know that the elements of $\coho^1(G_q,\Ad)$
act on the deformations $\tilde{\rho}$ with coefficients in $\Om/\pi^m$ by $u\cdot\tilde{\rho} = (1+\pi^{m-1}u)\tilde{\rho}$. 
Recall the notion of an element of $\coho^1(G_q,\Ad)$ 
preserving a set of deformations.

\begin{defi}
  Let $C_q$ be a set of deformations $\{\rho:G_q \to \GL_2(\Om)\}$ and $u\in
  \coho^1(G_q,\Ad)$.  We say that $u$ preserves $C_q$ if for any
  $\tilde{\rho}: G_q \to \GL_2(\Om/\pi^m)$ which is the reduction of a
  deformation of $C_q$ we have that $u\cdot\tilde{\rho}$ is also
  the reduction of some deformation of $C_q$.
\end{defi}

We want to prove that for every $\rho$ there exists a set $C_q$ of
deformations containing it that is preserved by a subspace $N_q$ of
certain dimension. We can prove such set exists for almost all
possible $\rho$ but some particular ones.

\begin{defi}
\label{bad}
We say that a representation $\rho:G_q \to \GL_2(\Om)$ is ``bad''
if 
\[
\rho \simeq \begin{pmatrix}
  \psi_1& \frac{\psi_1-\psi_2}{\pi^r}\\
  0&\psi_2
\end{pmatrix}
\]
 over $\GL_2(\Om)$ with $r>0$ and moreover the following holds:
 \begin{itemize}
  \item $\rrho$ is unramified and $\rrho^{ss}(\sigma)$ is scalar.
  \item $v(\psi_1(\sigma) - \psi_2(\sigma)) < v(\frac{\psi_1(\tau) - \psi_2(\tau)}{\pi^r})$.
 \end{itemize}
\end{defi}

We currently do not know how to find the desired family of deformations in these cases. Observe that finding the families $C_q$ and
subspaces $N_q$ accounts for proving that the Balancedness Assumption of \cite{KR} holds (see Assumption before Definition 3). In that 
scenario, if we start with a mod $\pi^n$ deformation $\rho_n$ and a prime $q$ for which every lift to $\Om$ is bad, we do not know 
how to prove that the Assuption holds.

\begin{prop}
\label{localtypes}
Let $\rho:G_q \to \GL_2(\Om)$ be a continuous representation. If $\rho$ is not bad then there always exists a positive integer
$\alpha$, a set $C_q$ of deformations of the reduction of $\rho$ modulo $\pi^\alpha$ to characteristic $0$ that
contains $\rho$ (up to $\GL_2(\Om)$-isomorphism), and a subspace $N_q
\subseteq \coho^1(G_q, \Ad)$ such that:

\begin{itemize}
 \item All the elements of $C_q$ are isomorphic over $GL_2(\Om)$ when restricted to inertia. 
 \item $N_q \subseteq \coho^1(G_q, \Ad)$ has codimension equal to
   $\dim \coho^2(G_q, \Ad)$.
 \item Every $u\in N_q$ preserves the mod $\pi^m$ reductions of elements of $C_q$ for $m \geq \alpha$.
\end{itemize}

In other words, there exists a
smooth deformation condition containing $\rho$ of dimension equal to
$\dim \coho^1(G_q, \Ad) -\dim \coho^2(G_q, \Ad)$ if we start
lifting from a big enough exponent.
\end{prop}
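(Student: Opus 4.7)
The plan is a case analysis on the structure of $\rho|_{I_q}$ and of the reduction $\rrho$. In every case I will let $C_q$ be the set of deformations whose restriction to inertia is $\GL_2(\Om)$-conjugate to that of a fixed model representation, so the first bullet holds by construction; the subspace $N_q$ will be the tangent space of this condition, described explicitly as a combination of coboundaries and unramified cocycles valued in the centralizer of the diagonal part. Since $q\neq p$, the local Euler characteristic gives $d_1 = d_0 + d_2$ and Tate local duality gives $d_0 = d_2$, so the required codimension of $N_q$ equals $d_0$, which is also the dimension I expect for the constructed family.

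The easy cases can be handled with $\alpha = 1$. If $\rrho$ is ramified I classify $\rho|_{I_q}$ (tame versus nontrivial wild part) and take $C_q$ to be all deformations with that inertia type; $N_q$ is then generated by the unramified Frobenius cocycle in the $e_1$ direction (or the corresponding coboundary in the semisimple subcase). If $\rrho$ is unramified with $\rrho^{ss}(\sigma)$ non-scalar, I diagonalize and take $C_q$ to be the unramified twists of this diagonal form. In both situations $N_q$ can be read off directly in the basis $e_1,e_2,e_3$, has codimension $d_2$, and preservation at $\alpha=1$ is transparent because $C_q$ already parametrizes every inertia-equivalent deformation.

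The delicate case, which I expect to be the main obstacle, is when $\rrho$ is unramified with $\rrho^{ss}(\sigma)$ scalar and $\rho$ itself is genuinely ramified. Here $\rho$ is conjugate over $\GL_2(K)$ to the form
\[
\begin{pmatrix} \psi_1 & (\psi_1-\psi_2)/\pi^r \\ 0 & \psi_2 \end{pmatrix}
\]
of Definition \ref{bad} for some $r > 0$, but not in general over $\GL_2(\Om)$. The hypothesis that $\rho$ is not bad, namely $v(\psi_1(\sigma)-\psi_2(\sigma)) \geq v((\psi_1(\tau)-\psi_2(\tau))/\pi^r)$, is precisely what lets me construct an integral change of basis modulo $\pi^\alpha$, for $\alpha$ taken roughly equal to the larger of these valuations plus $r$, that separates the two stable lines. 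With this $\alpha$, $C_q$ is defined as the family of unramified twists of the fixed ramified pair $(\psi_1,\psi_2)$, and $N_q$ is the span of the two Frobenius cocycles produced by varying the twists, plus, when needed, the coboundary direction stabilizing the inertia type. Choosing $\alpha$ first in terms of $\psi_1,\psi_2,r$, the codimension count reduces to the previous case.

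Finally, the preservation property holds essentially by construction: for $u \in N_q$ arising as a twist generator, $(1+\pi^{m-1}u)\tilde\rho$ corresponds to shifting $\psi_i$ by an unramified character whose effect stays inside $C_q$; for coboundary generators the twist is realized by a small conjugation inside the ambient deformation. In either case the perturbation preserves mod $\pi^m$ reductions of $C_q$ for every $m \geq \alpha$, which is exactly the required property.
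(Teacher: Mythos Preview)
Your plan has the right shape---a case split on the type of $\rrho$ and of $\rho$---but there is a dimension slip and, more importantly, the mechanism that actually produces $\alpha>1$ is missing.

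On dimensions: Tate duality does not give $d_0 = d_2$. It identifies $\coho^2(G_q,\Ad)$ with the dual of $\coho^0(G_q,\Ad(1))$, not of $\coho^0(G_q,\Ad)$; these agree only when $q\equiv 1\pmod p$. The correct consequence of the Euler characteristic is that the target dimension of $N_q$ is $d_1-d_2=d_0$, not that its target codimension is $d_0$. This matters because in several cases (e.g.\ $\rrho$ Steinberg with $\rho$ Induced, where $d_1=d_2=1$ and $d_0=0$) your heuristic predicts the wrong size for $N_q$.

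The substantive gap is in the cases requiring $\alpha>1$; your case split is too coarse to see what is going on. Two concrete instances:
\begin{itemize}
\item When $\rrho$ is unramified with $\rrho(\sigma)$ scalar (so $d_1=6$, $d_2=3$) you need three independent generators of $N_q$. An unramified twist in the $e_1$ direction gives one. The paper does \emph{not} get the other two from twists or coboundaries: it exhibits nontrivial cocycles $u_2,u_3$ together with explicit matrices $C\equiv\mathrm{Id}\pmod\pi$, with entries such as $\pi^{m-1}/(a-b)$ or $\pi^{m-1}/z$, satisfying $C\tilde\rho C^{-1}=(1+\pi^{m-1}u_i)\tilde\rho$ for every mod $\pi^m$ reduction $\tilde\rho$ of an element of $C_q$. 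These entries are integral precisely when $m$ exceeds the relevant valuation, and that is where $\alpha$ comes from. Your description (``an integral change of basis that separates the two stable lines'') cannot work here: when $r>0$ the two lines are not separable over $\Om$, which is the whole point. The ``not bad'' hypothesis is used not to split lines but to verify (Lemma~\ref{jordan}) that one further cocycle $u_1$ genuinely preserves $C_q$; the equation one needs to solve has no solution exactly in the bad case.
\item Your non-scalar unramified branch (``diagonalize and take unramified twists'') does not cover $\rho$ Induced with $\rrho$ unramified. There $\rrho^{ss}(\sigma)$ has eigenvalues $\pm 1$ (non-scalar, $q\equiv -1\pmod p$), yet $\rho$ is ramified and not upper-triangular, so it is not an unramified twist of anything diagonal. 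The paper takes $C_q=\{\rho\}$ and again produces the needed generator of $N_q$ as a nontrivial cocycle acting trivially on high reductions via an explicit conjugation.
\end{itemize}
In short, the heart of the argument for $\alpha>1$ is ``find enough nontrivial cocycles that become coboundaries modulo $\pi^m$ once $m$ is large'', realized by writing down the conjugating matrices by hand; this idea is absent from your proposal.
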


In order to prove the proposition, let us recall the main results about types of deformations and types of reduction mod $\pi$.
Although they are mainly known, the results and its proofs are contained in section $2$ of \cite{arxiv}.

\begin{prop}
Let $q \neq 2$, be a prime number, with $q \neq p$. Then every
representation $\rho:G_q \rightarrow \GL_2(\F)$, up to twist by a
character of finite order, belongs to one of the following three
types:
\begin{itemize}
\item {\bf Principal Series:}
$\rho \simeq \left(\begin{smallmatrix}
\phi&0\\
0&1\\ 
\end{smallmatrix}\right) \hspace{0.25cm} \text{or} \hspace{0.25cm}
\rho \simeq \left(\begin{smallmatrix}
1&\psi\\
0&1\\
\end{smallmatrix}\right).$

\item {\bf Steinberg:} $\rho \simeq \left(\begin{smallmatrix}
\chi&\mu\\
0&1\\ 
\end{smallmatrix}\right),$ where $\mu \in \coho^1(G_q,\F(\chi))$ and 
$\mu|_{I_q} \neq 0$.
 
\item {\bf Induced:} $\rho \simeq \Ind_{G_M}^{G_q}(\xi)$, where
  $M/\Q_q$ is a quadratic extension and $\xi: G_M \rightarrow
  \F^\times$ is a character not equal to its conjugate under the
  action of $\Gal(M/\Q_q)$.
\end{itemize}
Here $\phi:G_q \to \F^\times$ is a multiplicative character and
$\psi:G_q \to \F$ is an unramified additive character.
\label{prop:Diamond}
\end{prop}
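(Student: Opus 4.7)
The strategy is to analyze $\rho|_{I_q}$ by exploiting that wild inertia $P_q$ is pro-$q$ while the coefficient field has characteristic $p\neq q$. The image of $P_q$ in $\GL_2(\F)$ is finite of order coprime to $p$, so Maschke's theorem makes $\rho|_{P_q}$ semisimple, reducing its possible shapes to trivial, scalar, a sum of two distinct characters, or irreducible. From there I would case-split according to whether $\rho$ itself is irreducible or reducible as a $G_q$-representation.

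If $\rho$ is irreducible, then either $\rho|_{P_q}$ already acts by two distinct characters whose eigenlines are permuted by some element of $G_q$, or $\rho|_{I_q}$ is reducible but its two constituents are swapped by Frobenius. In either subcase, the stabilizer of an eigenline is an index-$2$ subgroup $G_M\subset G_q$ with $M/\Q_q$ a quadratic extension (unramified or ramified, respectively), and a standard check gives $\rho\simeq\Ind_{G_M}^{G_q}(\xi)$ with $\xi\neq\xi^\sigma$ on $G_M$. This is the Induced case.

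If $\rho$ is reducible, write $\rho\sim\begin{pmatrix}\chi_1 & *\\ 0 & \chi_2\end{pmatrix}$ and normalize $\chi_2=1$ by twisting with a finite-order character. A direct cocycle computation using the tame relation $\sigma\tau\sigma^{-1}=\tau^q$, together with the fact that $\chi$ is unramified at $q$, shows that the class of $*$ can be non-zero on $I_q$ only when $\chi_1(\sigma)\equiv q\pmod p$, i.e.\ only when $\chi_1=\chi$. Outside this branch a $\GL_2$-conjugation kills $*|_{I_q}$ and yields the Principal Series type, either in diagonal form with $\phi=\chi_1$ (when $\chi_1\neq 1$) or in the upper-unipotent form with unramified $\psi$ (when $\chi_1=1$). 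In the remaining branch $\chi_1=\chi$ with $*|_{I_q}\neq 0$, one obtains the Steinberg form.

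The main obstacle is the bookkeeping of twists, in particular ensuring that the ``up to twist by a character of finite order'' clause absorbs all the ambiguity in the choice of $\chi_2$, and verifying the cohomological claims at each branch. Tate local duality and the explicit description of the tame quotient provide everything needed; the argument is standard but delicate, and as the author remarks the full account is essentially written out in Section~$2$ of \cite{arxiv}, which I would follow.
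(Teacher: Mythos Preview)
Your proposal is correct and outlines the standard Diamond-type classification; the paper itself does not prove this proposition but simply refers to Section~2 of \cite{arxiv}, exactly as you do at the end of your sketch. Since both you and the paper defer to the same source and your outline matches the usual argument, there is nothing to add.
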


\begin{prop}
  Let $\tilde{\rho}:G_q \rightarrow \GL_2(\overline{\Z_p})$ be a
  continuous representation. Then up to twist (by a finite order
  character times and unramified character) and
  $\GL_2(\overline{\Z_p})$ equivalence we have:
  \begin{itemize}
  \item {\bf Principal Series:} $\tilde{\rho} \simeq
    \left(\begin{smallmatrix}
        \phi & \pi^n(\phi-1)\\
        0 & 1\end{smallmatrix} \right)$, with $n \in \Z_{\leq 0}$ satisfying
    $\pi^{n}(\phi-1) \in \overline{\Z_p}$ or $\tilde{\rho} \simeq
    \left(\begin{smallmatrix}
        1&\psi\\
        0&1\\
\end{smallmatrix}\right)$.

\item {\bf Steinberg:} $\tilde{\rho} \simeq \left(\begin{smallmatrix}
      \chi & \pi^n\mu \\
      0 & 1\end{smallmatrix} \right)$, with $n \in \Z_{\geq 0}$.

\item {\bf Induced:} There exists a quadratic extension $M/\Q_q$
  and a character $\xi: G_M \rightarrow \overline{\Z_p}^\times$ not
  equal to its conjugate under the action of $\Gal(M/\Q_q)$ such
  that $\tilde{\rho} \simeq \<v_1,v_2>_{\O_L}$, where for $\alpha$ a
  generator of $\Gal(M/\Q_p)$ and $\beta \in G_M$, the action is given by

\[
  \beta(v_1)= \xi(\beta)v_1,\quad \beta(v_2)= \xi^{\alpha}(\beta)v_2, \quad  \alpha(v_1)= v_2 \quad \text{and} \quad
  \alpha(v_2)=\xi(\alpha^2)v_1,
\]
%
or
\[
  \tilde{\rho}(\beta)=\begin{pmatrix}
               \xi(\beta)&\frac{\xi(\beta)-\xi^{\alpha}(\beta)}{\pi^{n}}\\
               0&\xi^{\alpha}(\beta)
              \end{pmatrix}   \qquad \text{ and } \qquad \tilde{\rho}(\alpha)=\begin{pmatrix}
               -a&\frac{\xi(\alpha^2)-a^2}{\pi^n}\\
               \pi^n&a
              \end{pmatrix}  
\]
where $\xi^\alpha$ is the character of $G_M$ defined by $\xi^\alpha(g) = \xi(\alpha g\alpha^{-1})$ and $a \in \Om_L^\times$.
Observe that when $M/\Q$ is ramified we can take $\alpha$ and $\beta$ to be a Frobenius element and a generator of the tame inertia respectively.
\end{itemize}
\label{types}
\end{prop}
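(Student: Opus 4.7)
The plan is to prove the classification by reducing to the mod $\pi$ classification already established in Proposition \ref{prop:Diamond}, and then analyzing the lifts case by case using the tame structure of $G_q$ and explicit cocycle computations. Since $q \neq p$, the wild inertia $P_q$ is pro-$\ell$ with $\ell \neq p$, so its image in $\GL_2(\overline{\Z_p})$ is finite of order coprime to $p$; by Schur--Zassenhaus (or by observing that representations of finite groups of order prime to $p$ are semisimple over $\overline{\Z_p}$ and can be simultaneously diagonalized), one can arrange the image of $P_q$ to be diagonal, and after twisting by a finite-order character, that it is trivial. Thus one reduces to studying representations of the tame quotient, topologically generated by $\sigma$ and $\tau$ subject to $\sigma\tau\sigma^{-1} = \tau^q$.

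Next I would run the case analysis driven by $\bar\rho = \tilde\rho \bmod \pi$ via Proposition \ref{prop:Diamond}. In the \emph{Principal Series} case with distinct characters mod $\pi$, the two eigenlines of $\bar\rho$ lift by Hensel (the difference of the two diagonal entries is a unit, so the usual idempotent lifting works), and conjugating to make them the standard coordinate lines gives $\tilde\rho = \bigl(\begin{smallmatrix}\phi&u\\0&1\end{smallmatrix}\bigr)$. The entry $u$ is a $1$-cocycle in $Z^1(G_q, \overline{\Z_p}(\phi))$; after twisting by an unramified character to clear the unramified part of $u$, the possible cocycles up to coboundary are generated (over $\overline{\Z_p}$) by $\phi - 1$, so $u = \pi^n(\phi - 1)$ for a suitable $n \in \Z_{\leq 0}$ chosen minimally so that the entries lie in $\overline{\Z_p}$. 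The trivial-character subcase is handled directly: either $\tilde\rho$ is diagonalizable (absorbed into the generic form by letting the off-diagonal vanish) or one finds an unramified additive $\psi$ on inertia. The \emph{Steinberg} case is analogous: here $\bar\rho$ is a non-split extension of the trivial character by $\chi$, so $\chi$ itself lifts canonically and one writes $\tilde\rho = \bigl(\begin{smallmatrix}\chi&v\\0&1\end{smallmatrix}\bigr)$ with $v \in Z^1(G_q,\overline{\Z_p}(\chi))$; since $\chi - 1$ is a unit away from a specific eigenvector, the cocycles are generated by a canonical ramified class $\mu$ (the one underlying the Steinberg representation), giving $v = \pi^n \mu$ with $n \in \Z_{\geq 0}$ forced by the requirement that $\bar v$ be nonzero on inertia.

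The \emph{Induced} case is the most delicate and is where I would expect the main obstacle. Here one first lifts the relevant quadratic extension $M/\Q_q$ and the character $\xi$ of $\bar\rho|_{G_M}$ by Hensel (the latter because the condition that $\xi \neq \xi^\alpha$ persists in characteristic zero after a unit adjustment). If $M/\Q_q$ is unramified, one can choose a $G_M$-stable decomposition into the two eigenlines of $\xi$ and $\xi^\alpha$, giving the first explicit matrix description with $\alpha$ swapping the two coordinates (the only freedom being the value of $\xi(\alpha^2) \in \overline{\Z_p}^\times$). If $M/\Q_q$ is ramified, the obstruction is that $\xi$ and $\xi^\alpha$ can become arbitrarily close $\pi$-adically on inertia, so the naive eigenbasis for $G_M$ does not extend to a nice basis for $G_q$; the remedy is to measure the failure by an integer $n$ (the $\pi$-adic distance between $\xi$ and $\xi^\alpha$ on inertia) and introduce the twisted basis of the second matrix form, where both off-diagonal entries are scaled by $\pi^{-n}$ and the unit $a$ records the action of $\alpha$. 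The main technical points are to verify that these matrices indeed define a homomorphism (the relations $\beta\alpha = \alpha\beta'$ on $G_q$ translate into matrix identities that reduce to $\xi \neq \xi^\alpha$ and the definition of $a$), and that the resulting entries lie in $\overline{\Z_p}$ precisely when $n$ is chosen to equal this $\pi$-adic distance. The uniqueness up to $\GL_2(\overline{\Z_p})$-conjugacy in each case follows by comparing characters and inspecting centralizers.
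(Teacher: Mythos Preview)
The paper does not actually give a proof of this proposition: the sentence preceding Proposition~\ref{prop:Diamond} states that ``the results and its proofs are contained in section~2 of \cite{arxiv}'', and no argument is supplied here. So there is no in-paper proof to compare your attempt against directly.

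That said, your outline has a structural issue worth flagging. You organize the case analysis by the type of the \emph{residual} representation $\bar\rho$ and then try to lift, but the statement you are proving is a classification by the type of $\tilde\rho$ itself (equivalently, of $\tilde\rho\otimes\overline{\Q_p}$). These do not correspond bijectively: Proposition~\ref{prop:localtypesreductions}, stated immediately after the one you are proving, records precisely that a characteristic-zero Principal Series can reduce to Steinberg, a characteristic-zero Steinberg can reduce to Principal Series, and an Induced can reduce to Steinberg or Principal Series. Concretely, in your ``Principal Series with distinct characters mod $\pi$'' paragraph you diagonalize $\tilde\rho$ to $\bigl(\begin{smallmatrix}\phi&u\\0&1\end{smallmatrix}\bigr)$ and then assert that the cocycle $u$ is, up to coboundary, a multiple of $\phi-1$. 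This fails exactly when $\phi=\chi$: then $H^1(G_q,\overline{\Z_p}(\chi))$ has an extra ramified generator $\mu$, and $\tilde\rho$ is Steinberg rather than Principal Series, even though $\bar\rho$ may well be Principal Series with distinct characters. Similar cross-contamination occurs in your other cases. Invoking Proposition~\ref{prop:Diamond} as the organizing principle also risks circularity, since the reduction-type statement~\ref{prop:localtypesreductions} presumably relies on the classification you are trying to establish.

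The cleaner route, and almost certainly the one taken in \cite{arxiv}, is to first classify $\tilde\rho\otimes\overline{\Q_p}$ into the three types (a standard consequence of the tame structure of $G_q$ for $q\neq p$, not requiring any mod-$\pi$ input), and only then produce the explicit integral models listed in the statement by choosing lattices carefully. Your treatment of wild inertia and of the induced case (measuring the $\pi$-adic distance between $\xi$ and $\xi^\alpha$ to produce the second matrix form) is on the right track once the case split is made on the correct object.
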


\begin{prop}
Let $\trho$ be as above and $\rrho$ its mod $p$ reduction. We have the following types of reduction:
\begin{itemize}
\item If $\trho$ is Principal Series, then $\rrho$ can be Principal Series or Steinberg, and the latter occurs only when $q \equiv 1 \pmod p$.
\item If $\trho$ is Steinberg, then $\rrho$ can be Steinberg or Principal Series, and the latter occurs only when $\rrho$ is unramified.
\item If $\trho$ is Induced, then $\rrho$ can be Induced, Steinberg or unramified Principal Series. For the last two cases we must have
  $q \equiv -1 \pmod p$ and $M/\Q$ ramified.
\end{itemize}
\label{prop:localtypesreductions}
\end{prop}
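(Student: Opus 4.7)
The plan is case analysis by the type of $\trho$ as classified in Proposition \ref{types}: for each integral normal form I reduce modulo $\pi$ and identify the resulting type using Proposition \ref{prop:Diamond}, tracking the arithmetic conditions on $q$ that govern which reductions occur.

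First, for a Principal Series lift $\trho\simeq\begin{pmatrix}\phi&\pi^n(\phi-1)\\0&1\end{pmatrix}$ with $n\leq 0$, I split by whether $\bar\phi\equiv 1$. If $\bar\phi\not\equiv 1$, then $\phi-1$ is a unit on some element, forcing $n=0$; after conjugation the reduction is the diagonal $\bar\phi\oplus 1$, which is Principal Series. If $\bar\phi\equiv 1$, the off-diagonal $\pi^n(\phi-1)$ can still reduce to a non-trivial cocycle on $I_q$ with values in $\F$, giving Steinberg. For this to occur, $\phi(\tau)$ must lie in $1+\pi\overline{\Z_p}$ (a pro-$p$ group) while being non-trivial, and the identity $\phi(\tau)^{q-1}=1$ coming from $G_q$-equivariance forces the order of $\phi(\tau)$ to be a non-trivial $p$-power dividing $q-1$, hence $p\mid q-1$.

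Next, for a Steinberg lift $\trho\simeq\begin{pmatrix}\chi&\pi^n\mu\\0&1\end{pmatrix}$ with $n\geq 0$, the diagonal $\bar\chi\oplus 1$ is always unramified since $\chi$ is unramified at $q\neq p$. If $n\geq 1$ the off-diagonal vanishes mod $\pi$ and $\rrho$ is the split unramified Principal Series. If $n=0$, the reduction of $\mu$ may remain ramified on inertia (yielding Steinberg) or become trivial there (yielding unramified Principal Series); no ramified Principal Series can occur precisely because the diagonal is unramified.

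Finally, for an Induced lift one works with the two integral normal forms of Proposition \ref{types}. In the first, the reduction is $\Ind_{G_M}^{G_q}\bar\xi$, which is still an irreducible induction as long as $\bar\xi\neq\bar\xi^\alpha$. In the second form, available only when $M/\Q_q$ is ramified, the off-diagonal $\frac{\xi(\beta)-\xi^\alpha(\beta)}{\pi^n}$ becomes a unit when the valuation of $\xi-\xi^\alpha$ matches $n$, and when $\bar\xi=\bar\xi^\alpha$ the reduction is a non-split extension rather than a direct sum. A matrix computation then identifies the reduction as Steinberg when the off-diagonal remains ramified on $I_q$, and as unramified Principal Series otherwise. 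To pin down the condition $q\equiv -1\pmod p$, one analyzes $\bar\xi = \bar\xi^\alpha$: for $M/\Q_q$ ramified quadratic one may take $\alpha\in I_q$, and the equality reduces, via the commutator $\alpha\,\mathrm{Frob}_M\,\alpha^{-1}\mathrm{Frob}_M^{-1}\in I_M$, to a constraint on $\bar\xi$ restricted to a specific tame element whose order forces $p\mid q+1$. The main obstacle is this induced case: unpacking the second integral normal form to detect when its reduction collapses to a non-split Steinberg or to an unramified Principal Series, and extracting the arithmetic condition $q\equiv -1\pmod p$ from the Frobenius/tame-inertia interaction for ramified quadratic $M/\Q_q$.
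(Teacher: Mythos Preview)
The paper does not give its own proof of this proposition: immediately before stating Propositions~\ref{prop:Diamond}--\ref{prop:localtypesreductions} it says that ``the results and its proofs are contained in section~2 of \cite{arxiv}'' and then states the three propositions without argument. Your plan---reduce each integral normal form from Proposition~\ref{types} modulo~$\pi$ and read off the type via Proposition~\ref{prop:Diamond}---is exactly the natural strategy and is what the cited reference carries out. Your Principal Series and Steinberg analyses are correct and complete.

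The Induced case, as you yourself flag, is where the real work lies, and your sketch has genuine gaps there. First, the claim that the second integral normal form is ``available only when $M/\Q_q$ is ramified'' is not part of Proposition~\ref{types} and is not justified; both lattice normal forms exist for arbitrary quadratic $M$, and the remark in Proposition~\ref{types} about choosing $\alpha,\beta$ to be Frobenius and tame generator is a convenience in the ramified case, not a restriction on when the form occurs. Second, you do not say what happens in the \emph{first} normal form when $\bar\xi=\bar\xi^{\alpha}$: there the reduction is visibly semisimple (the matrix of $\alpha$ has distinct eigenvalues $\pm\sqrt{\bar\xi(\alpha^2)}$ and $G_M$ acts by the scalar $\bar\xi$), so it can never be Steinberg---this is part of the mechanism that forces one into the second lattice, and it should be said. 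Third, the commutator argument for $q\equiv -1\pmod p$ is too compressed to verify; one cleaner line is to observe that once $\bar\xi=\bar\xi^{\alpha}$ the two diagonal characters of $\rrho$ are the two extensions of $\bar\xi$ to $G_q$ and hence differ exactly by the quadratic character $\epsilon_M$, so the Steinberg shape forces $\epsilon_M=\bar\chi^{\pm1}$, and comparing values on Frobenius and on inertia yields the congruence on $q$ together with the ramification constraint on $M$. As written, your Induced paragraph asserts the right conclusions but does not actually derive either the condition on $M$ or the congruence on $q$.
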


To prove Proposition~\ref{localtypes} we consider all the possible
pairs of $\GL_2(\F)$ and $\GL_2(\Om)$-equivalence classes for $\rho$ and $\rrho$ (indexing
them first by the class of $\rrho$), and for each of them
we define the corrsponding deformation class and cohomology subspace.\\

\noindent $\bullet$ {\bf Case 1: $\rrho$ is ramified Principal
Series.}  Proposition \ref{prop:localtypesreductions} implies that a
mod $\pi$ Principal Series can only come from a characteristic $0$
principal series.  The full study of this case is done in Case 1 of
Section 4 of [CP14]. The work there is done for $\Om$ unramified but
the same applies in our situation.\\

\noindent $\bullet$ {\bf Case 2: $\rrho$ is Steinberg.}  When $\rrho$
is Steinberg, it can be the reduction of any of the three
characteristic $0$ ramified types:

$\cdotp$ {\bf Case 2.1: $\rho$ is Steinberg}. The definition of $C_q$
and $N_q$ is essentially the same as \cite{arxiv}, Case 2 of Section
4. Although in that work only the case where $\Om/\Z_p$ is unramified
is treated, the ramification of $\Om$ does not affect the results.

$\cdotp$ {\bf Case 2.2: $\rho$ is Principal Series}. Proposition
\ref{prop:localtypesreductions} implies that $q \equiv 1 \pmod{p}$.
Let $\rho = \begin{pmatrix}
  \psi&*\\
  0&1
\end{pmatrix}$. Without loss of generality, we can take $*(\tau)=1$. We have the following lemma:

\begin{lemma}
 A deformation $\tilde{\rho}: G_q \rightarrow \GL_2(\Om/\pi^m)$ which has the form:

\[
\tilde{\rho}(\tau) = \begin{pmatrix}
\psi(\tau)&1\\
0&1
\end{pmatrix} \hspace{0.2cm} \text{and} \hspace{0.2cm} \tilde{\rho}(\sigma) = \begin{pmatrix}
\alpha&\gamma\\
0&\beta
\end{pmatrix},
\]
has a unique lift to characteristic zero of the same form if and only
if $\beta^2 + \gamma(\psi(\tau)-1)\beta + \Psi \equiv 0 \pmod{\pi^m}$,
where $\Psi$ is a fixed lift determinant.
\label{lemma:Khare}
\end{lemma}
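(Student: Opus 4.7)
The plan is to impose the tame defining relation $\sigma\tau\sigma^{-1}=\tau^q$ of the local Galois group on the prescribed matrix forms and read off the resulting algebraic constraint. A direct induction yields
\[
\tilde\rho(\tau)^q \;=\; \begin{pmatrix} \psi(\tau)^q & \tfrac{\psi(\tau)^q-1}{\psi(\tau)-1} \\ 0 & 1 \end{pmatrix},
\]
while conjugation produces
\[
\tilde\rho(\sigma)\,\tilde\rho(\tau)\,\tilde\rho(\sigma)^{-1} \;=\; \begin{pmatrix} \psi(\tau) & \tfrac{\alpha+\gamma(1-\psi(\tau))}{\beta} \\ 0 & 1 \end{pmatrix}.
\]
Equating $(1,1)$ entries forces $\psi(\tau)^{q-1}=1$, which is automatic for $\psi$ to extend to a character of $G_q$ and which collapses $\tilde\rho(\tau)^q$ to $\tilde\rho(\tau)$. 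Equating $(1,2)$ entries then yields the linear relation $\alpha=\beta+\gamma(\psi(\tau)-1)$, and substituting this into the fixed-determinant condition $\alpha\beta\equiv\Psi\pmod{\pi^m}$ produces exactly the stated quadratic congruence in $\beta$.

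The only-if direction is then immediate, since any characteristic-zero lift of the specified form satisfies the same matrix identities over $\Om$, hence modulo $\pi^m$. For the if direction, I would lift $\gamma$ arbitrarily to $\tilde\gamma \in \Om$ and lift $\psi(\tau)$ canonically as a $(q-1)$-th root of unity (via Hensel applied to $x^{q-1}-1$, which is available because $q\not\equiv 0\pmod p$). Consider
\[
f(x) \;=\; x^2 + \tilde\gamma(\tilde\psi(\tau)-1)\,x + \tilde\Psi \;\in\; \Om[x].
\]
By hypothesis $f(\beta)\equiv 0\pmod{\pi^m}$, and its derivative satisfies
\[
f'(\beta) \;=\; 2\beta + \gamma(\psi(\tau)-1) \;=\; \alpha+\beta \;=\; \trace\tilde\rho(\sigma),
\]
which reduces modulo $\pi$ to $1+\chi(\sigma)\equiv 2$, using the Steinberg shape of $\rrho$ together with $q\equiv 1\pmod p$. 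Since $p>2$, this is a unit in $\Om$, so Hensel's lemma provides a unique $\tilde\beta \in \Om$ lifting $\beta$ with $f(\tilde\beta)=0$, and then $\tilde\alpha = \tilde\Psi/\tilde\beta$ determines the lift.

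The main technical input I expect to need is the Hensel applicability, which reduces to verifying that $\trace\tilde\rho(\sigma)$ is a unit modulo $\pi$; this is where the standing hypothesis $p>5$ (really only $p>2$ is required here) intervenes, together with the congruence $q\equiv 1 \pmod p$ that puts us in this subcase. The rest of the argument is a straightforward matrix computation combined with substitution into the determinant relation, and I do not anticipate any genuine obstruction beyond careful bookkeeping of signs in the conjugation identity.
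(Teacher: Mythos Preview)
Your argument is correct and is exactly the computation the paper defers to Proposition~3.4 of \cite{khare}: impose the tame relation $\sigma\tau\sigma^{-1}=\tau^q$ on the prescribed upper-triangular shapes, combine with the determinant constraint, and lift via Hensel using that $f'(\beta)\equiv\trace\rrho(\sigma)\equiv 2\pmod\pi$ is a unit. Two minor remarks: your substitution actually yields $\beta^2+\gamma(\psi(\tau)-1)\beta-\Psi\equiv 0$, so the sign on $\Psi$ in the displayed statement is a typo; and ``unique'' should be read as uniqueness of $\tilde\beta$ once a lift $\tilde\gamma$ is fixed, which is precisely what your Hensel step delivers.
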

\begin{proof}
  This Lemma is part of a computation made in Proposition 3.4 of
  \cite{khare}. There, it is done for deformations with coefficients in unramified coefficient field
  and its mod $p$ reductions, but the same proof works in general. 
\end{proof}

Let $j \in \coho^1(G_q, \Ad)$ be the element defined by:
\[
j(\sigma) = e_2 \hspace{0.2cm} \text{,} \hspace {0.2cm}j(\tau) = 0,\] 
and take $N_q$ to be the subspace it generates. Also let $C_q$ be the
set of deformations to $\Om$ which have the form given in
Lemma~\ref{lemma:Khare}. Observe that any mod $\pi^m$ reduction of an
element of $C_q$ satisfies the equation $\beta^2 +
\gamma(\psi(\tau)-1)\beta + \Psi = 0$, and acting by $j$ on it does
not affect this (as $\pi$ divides $\psi(\tau)-1$ so adding a multiple
of $\pi^{m-1}$ to $\gamma$ does not change the equation modulo
$\pi^m$). Then, Lemma \ref{lemma:Khare} guarantees that $C_q$ and
$N_q$ satisfy the property we are looking for.\\

$\cdotp$ {\bf Case 2.3: $\rho$ is Induced}. Proposition
\ref{prop:localtypesreductions} tells us that necessarily $q \equiv -1
\pmod{\pi}$ and
by results in Section $3$ of [CP14] we have that $d_1 = d_2 = 1$. Therefore we can take $N_q = \set{0}$ and $C_q = \set{\rho}$.\\

\noindent $\bullet$ {\bf Case 3: $\rrho$ is Induced.}  By Proposition
\ref{prop:localtypesreductions}, when $\rrho$ is Induced, $\rho$ must
be Induced as well. The choice of $C_q$ and $N_q$ in this case is
explained in Case $3$ of Section $4$ of [CP14].\\

\noindent $\bullet$ {\bf Case 4: $\rrho$ is unramified.}
Being unramified, $\rrho$ allows lifts to any type of deformation. We must treat each of them separately as they have many subcases.
The case of $\rho$ being {\bf Steinberg} is dealt with in Case 4 of Section 4 of [CP14]. There are two other cases left to study.
In each of them we will distinguish between three types of equivalence classes for $\rrho$, acording to the image of Frobenius.
This case is the one that includes bad primes, the calculations made here show where the badness condition appears.\\

{\bf Case 4.1: $\rho$ is Principal Series.} In this case we have
$\rho = \left(\begin{smallmatrix}
    \phi & \pi^{-r}(\phi-1)\\
    0 & 1\end{smallmatrix} \right)$ with $r \geq 0$. By Proposition
\ref{prop:localtypesreductions} we necessarily have
$q \equiv 1 \pmod{\pi}$.
        
$\cdotp$ If $\rrho(\sigma) = \left(\begin{smallmatrix}
\beta & 0\\
0 & 1\end{smallmatrix} \right)$ with $\beta \neq 1$ we have $d_1 = 2$ and $d_2 = 1$ so we are looking for a one-dimensional
subspace $N_q$. Observe that necessarily $\rho \simeq \left(\begin{smallmatrix}
\phi & 0\\
0 & 1\end{smallmatrix} \right)$ over $\Om$ as $\psi(\sigma) \equiv \beta \nequiv 1 \pmod{\pi}$ so $(\psi - 1)/\pi$ 
is not an integer.

Let $u \in \coho^1(G_q,\Ad)$ be the cocycle defined by $u(\sigma) = e_1$ and $u(\tau) = 0$. We can take $N_q = \langle u \rangle$ and $C_q$ the
set of representations of the form $\rho \simeq \left(\begin{smallmatrix}
\phi\psi & 0\\
0 & \psi^{-1}\end{smallmatrix} \right)$ for $\psi: G_q \to \Om^\times$ unramified. It is easily checked that these satisfy the desired properties.\\

$\cdotp$ If $\rrho(\sigma_q) = \left(\begin{smallmatrix}
1 & 1\\
0 & 1\end{smallmatrix} \right)$ the corresponding dimensions are $d_1 = 2$ and $d_2 = 1$. In this case we have $\rho \simeq \left(\begin{smallmatrix}
\phi & \frac{\phi -1}{\pi^r}\\
0 & 1\end{smallmatrix} \right)$,
with $v_\pi(\phi -1) = r$. Now take the cocycle defined by $u(\sigma) = 0$ and $u(\tau) = e_2$. 
We take $\alpha = v(x-y) + 2$ and set $N_q = \langle u \rangle$ and $C_q$ the set of deformations of $\rrho$ such that
$\rho \simeq \left(\begin{smallmatrix}
\gamma\phi & \beta\frac{\gamma\phi - \gamma^{-1}}{\pi^r}\\
0 & \gamma^{-1}\end{smallmatrix} \right)$
for some $\beta \in \Om^\times$ and $\gamma$ an unramified character congruent to $1$ modulo $\pi^\alpha$. 

\begin{lemma}
\label{jordan}
 The set $C_q$ and subspace $N_q$ defined above satisfy that $N_q$ preserves $C_q$ mod $\pi^m$ for all $m$ such that $\phi$ is ramified mod $\pi^m$.
\end{lemma}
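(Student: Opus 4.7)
The plan is to verify the preservation by explicit construction. Writing $\tilde\rho \equiv \rho_{\gamma,\beta} \pmod{\pi^m}$ and setting $s := v(\phi(\tau)-1)$, so that the hypothesis reads $s<m$, the action of $u$ modifies only the $(1,2)$-entry of $\tilde\rho(\tau)$, adding $\pi^{m-1}$. My goal is to produce admissible parameters $(\gamma', \beta')$ and a matrix $M \in \GL_2(\Om/\pi^m)$ such that $M\,\rho_{\gamma',\beta'}\,M^{-1} \equiv u\cdot\tilde\rho \pmod{\pi^m}$.

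The ansatz I would use is $\gamma'(\sigma) = \gamma(\sigma)(1 + \pi^\alpha g)$, together with a compatible $\beta' \in \Om^\times$ and a lower-unipotent conjugation $M = \left(\begin{smallmatrix}1 & 0\\ \mu & 1\end{smallmatrix}\right)$. Such a conjugation leaves $(1,2)$-entries invariant to leading order while shifting the diagonal entries by $\pm \mu$ times the $(1,2)$-entry of $\rho'(g)$. Because $\rho'(\sigma)_{(1,2)}$ is a unit in $\Om/\pi^m$ (using $\alpha>r$, so that $(\gamma\phi-\gamma^{-1})/\pi^r$ inherits the valuation-zero leading term from $(\phi(\sigma)-1)/\pi^r$), while $\rho'(\tau)_{(1,2)}$ has valuation $s-r>0$, one can choose $\mu$ so that the $\pi^\alpha g$-correction to the $\sigma$-diagonal introduced by $\gamma'$ is absorbed, and the corresponding perturbation to the $\tau$-diagonal vanishes modulo $\pi^m$. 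The remaining $(1,2)$-matches at $\sigma$ and $\tau$ yield two equations whose simultaneous solution reduces, after eliminating $\beta'$, to a single scalar equation of the form
\[
-2\pi^{\alpha-r} g\cdot u_\sigma^{-1} + O(\pi^{2(\alpha-r)}) \equiv \pi^{m-1+r-s} w_\tau^{-1} \pmod{\pi^m},
\]
where $u_\sigma, w_\tau \in \Om^\times$ are the leading units of $(\gamma\phi-\gamma^{-1})/\pi^r$ and $(\phi(\tau)-1)/\pi^s$ respectively.

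The main obstacle is finding a solution $g \in \Om$ such that $\gamma'$ stays in the admissible family of $C_q$ (that is, $\gamma' \equiv 1 \pmod{\pi^\alpha}$, which is automatic for $g \in \Om$) and $\beta' = \beta\bigl(1 - \pi^{\alpha-r} g/u_\sigma + O(\pi^{2(\alpha-r)})\bigr)$ remains a unit. The hypothesis $s<m$ guarantees that the right-hand side has valuation $m-1+r-s \ge 0$, and the choice of $\alpha$ in the definition of $C_q$ makes the leading left-hand side term have valuation $\alpha-r+v(g)$, so $g$ of valuation $m-1-s+2r-\alpha$ suffices when that quantity is non-negative. For values of $s$ close to $m$ the first-order matching is insufficient and one solves the equation iteratively using the higher-order Neumann terms $\pi^{2\alpha} g^2, \pi^{3\alpha} g^3, \dotsc$ coming from the expansion of $(1+\pi^\alpha g)^{-1}$; this recursion terminates inside the finite quotient $\Om/\pi^m$, producing the desired $(\gamma',\beta',M)$ and completing the proof.
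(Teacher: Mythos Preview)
Your approach has a genuine gap: you never invoke the hypothesis that $\rho$ is \emph{not bad}, and the paper's own remark immediately after the proof shows that without this hypothesis the preservation fails. Concretely, the ``not bad'' condition in this case reads $v(\phi(\sigma)-1)\ge v\bigl((\phi(\tau)-1)/\pi^r\bigr)$, i.e.\ $r\ge s-r$, and this valuation inequality is exactly what is needed to solve the matching equation. Your displayed first-order equation requires $v(g)=m-1-s+2r-\alpha=m-3+2r-2s\ge 0$, which with $\alpha=s+2$ means $m\ge 2s-2r+3$; even granting ``not bad'' ($s\le 2r$) this fails for $m=\alpha=s+2$ when $s=2r$. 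Your fallback (``solve iteratively using the higher-order Neumann terms $\pi^{2\alpha}g^2,\dots$'') cannot rescue this: those correction terms have \emph{higher} valuation than the leading one, so they cannot produce a target of strictly lower valuation than $\alpha-r$. A second unaddressed point is that your lower-unipotent conjugation $M=\left(\begin{smallmatrix}1&0\\\mu&1\end{smallmatrix}\right)$ sends an upper-triangular matrix $\left(\begin{smallmatrix}a&c\\0&b\end{smallmatrix}\right)$ to one with $(2,1)$-entry $\mu(a-b)-\mu^2 c$; for the result to equal the upper-triangular $u\cdot\tilde\rho$ you must check this vanishes modulo $\pi^m$ at both $\sigma$ and $\tau$, and this again hinges on the ``not bad'' inequality.

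The paper's proof avoids both difficulties by a different and much cleaner observation: because $\rrho(\tau)=\mathrm{Id}$ and $\rrho(\sigma)=\left(\begin{smallmatrix}1&1\\0&1\end{smallmatrix}\right)$, the $1$-cochain sending $\sigma\mapsto\lambda_1 e_1+\lambda_2 e_2$ and $\tau\mapsto 0$ is a \emph{coboundary} for every $\lambda_1,\lambda_2$. Thus, after acting by $u$, one may freely modify the diagonal and upper-right entries of $\tilde\rho(\sigma)$ without leaving the strict-equivalence class and without introducing any $(2,1)$-entry. This reduces the whole problem to the single proportionality equation
\[
\frac{x-y}{\,z+\pi^{m-1}\,}=\frac{a(1+\lambda_1\pi^{m-1})-b(1+\lambda_1\pi^{m-1})^{-1}}{c+\lambda_2\pi^{m-1}},
\]
equivalently $\lambda_1(z+\pi^{m-1})(a+b(1+\lambda_1\pi^{m-1})^{-1})+(a-b)=\lambda_2(x-y)$. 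After arranging $v(a-b)\le v(x-y)$ (replacing $\sigma$ by $\tau\sigma$ if needed), the ``not bad'' hypothesis gives $v(a-b)\ge v(z)$, and then one first chooses $\lambda_1$ to equalise valuations and reads off $\lambda_2$. No iteration is needed and the role of the bad-prime hypothesis is transparent.
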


\begin{proof}
Assume that $\rho(\sigma) = \left(\begin{smallmatrix}
                             a&\frac{a-b}{\pi^r}\\
                             0&b
                            \end{smallmatrix}\right)$ and $\rho(\tau) = \left(\begin{smallmatrix}
                                                                        x&\frac{x-y}{\pi^r}\\
                                                                        0&y\end{smallmatrix}\right)$.
Our assumptions on $\rho$ not being bad tell us that $v(a-b) \geq v((x-y)/\pi^r)$. If we have
$v(a-b) > v(x-y)$ then we change $\sigma$ by $\tau\sigma$ (which is another Frobenius element). In this way, we can assume that 
$v(a-b) \leq v(x-y)$. 

We want to prove that if we have a deformation $\trho:G_q \to \GL_2(\Om/\pi^{m})$ that sends
$$ \trho(\sigma) = \begin{pmatrix}
                             a&c\\
                             0&b
                            \end{pmatrix}\hspace{0.2cm} \text{and} \hspace{0.2cm} \trho(\tau) = \begin{pmatrix}
                                                                        x&z\\
                                                                        0&y\end{pmatrix},$$
with $x \neq y$, and is the reduction of some element of $C_q$ (i.e. $c = \beta(a-b)$ and $z = \beta(x-y)$ for some $\beta,a,b,c,x,y,z \in \Om$), 
then $u\cdot\trho$ is also the reduction of some element of $C_q$. Recall that 
$$ u\cdot\trho(\sigma) = \begin{pmatrix}
                             a&c\\
                             0&b
                            \end{pmatrix}\hspace{0.2cm} \text{and} \hspace{0.2cm} u\cdot\trho(\tau) = \begin{pmatrix}
                                                                        x&z+\pi^{m-1}\\
                                                                        0&y\end{pmatrix}.$$
                                                                        
It is easily checked that the cocycle $v$ that sends $\sigma$ to $\lambda_1e_1 + \lambda_2e_2$ and $\tau$ to $0$ is a coboundary
for any choice of $\lambda_1, \lambda_2 \in \F$. So $\trho$ can also be tought as
\[
 u\cdot\trho(\sigma) = \begin{pmatrix}
                             a(1+\lambda_1\pi^{m-1})&c+\lambda_2\pi^{m-1}\\
                             0&b(1+\lambda_1\pi^{m-1})^{-1}
                            \end{pmatrix}\hspace{0.2cm} \text{and} \hspace{0.2cm} u\cdot\trho(\tau) = \begin{pmatrix}
                              x&z+\pi^{n-1}\\
                              0&y\end{pmatrix}
\]
for any $\lambda_1, \lambda_2 \in \F$. Therefore, it is enough to find some $\lambda_1, \lambda_2 \in \Om$
such that 
\[
\frac{x-y}{z+\pi^{m-1}} = \frac{a(1+\lambda_1\pi^{m-1})-b(1+\lambda_1\pi^{m-1})^{-1}}{c+\lambda_2\pi^{m-1}}.
\]
given that 
\[
\frac{x-y}{z} = \frac{a-b}{c}.
\]
Expanding this equation we find out that it is equivalent to 
\[
\lambda_1(z+\pi^{m-1})(a+b(1+\lambda_1\pi^{m-1})^{-1}) + a-b = \lambda_2(x-y). 
\]
which has a solution given that $v(a-b) \geq v(z)$ (we solve first for $\lambda_1$ in order for both sides to have the same valuation, and then
there is a $\lambda_2$ that makes the equality true).
\end{proof}

\begin{remark}
Observe that when the condition $v(a-b) \geq v(z)$ does not hold, the last equation of the proof does not have a solution, since 
no matter which $\lambda_1, \lambda_2 \in \Om$ we pick, the valuation of the left hand side is $v(a-b)$ and the valuation
of the right hand side is bigger or equal than $v(x-y) \geq v(z) > v(a-b)$. Moreover, following the same type of computations
we can prove that for the chosen set $C_q$ there is no non-trivial $u \in \coho^1(G_q,\Ad)$ such that the subspace $\langle u \rangle$
preserves $C_q$.

\end{remark}

$\cdotp$ If $\rrho(\sigma_q) = \left(\begin{smallmatrix}
    1 & 0\\
    0 & 1\end{smallmatrix} \right)$ we have $d_1 = 6$ and $d_2 = 3$
therefore we need to find a subspace $N_q$ of dimension $3$.  This
case is a little more involved than the other two as there are
non-trivial elements of $\coho^1(G_q,\Ad)$ that act trivially on modulo
$\pi^m$ deformations for high powers of $\pi$. We follow the same ideas as in the
study of the Steinberg-reducing-to-unramified case
(the spirit of these ideas is taken from the approach to trivial primes followed in \cite{ravihamblen}).

Assume first that $\rho \simeq \left(\begin{smallmatrix}
    \phi & 0\\
    0 & 1\end{smallmatrix} \right)$. We will take $C_q$ as the set of representations of the form 
\[
\rho' \simeq \begin{pmatrix}
  \gamma\phi&0\\
  0&\gamma^{-1}
                                                                                                        \end{pmatrix}
\]
with $\gamma:G_q \to \Om^\times$ an unramified character, that lift the reduction mod $\pi^\alpha$ of $\rho$, 
with $\alpha = v(\phi_1(\tau) - \phi_2(\tau)) + 2$. Clearly, the
set $C_q$ is preserved by the cocycle $u_1$ that sends $\sigma \mapsto
e_1$ and $\tau \mapsto 0$.

We will construct two more cocycles $u_2$ and $u_3$ that act trivially
on reductions modulo $\pi^m$ of deformations on $C_q$ for $m\geq \alpha$. 
Let $\trho: G_q \to \GL_2(\Om/\pi^m)$ be the mod $\pi^m$
reduction of an element in $C_q$. Let $\trho(\sigma) =
\left(\begin{smallmatrix}
    a&0\\
    0&b
\end{smallmatrix}\right)$ and $ \trho(\tau) = \left(\begin{smallmatrix}
  x&0\\
  0&y \end{smallmatrix}\right)$.  Let $u \in \coho^1(G_q,\Ad)$. To
prove that $u$ acts trivially on $\trho$ we need to find a matrix $C
\in \GL_2(\Om/\pi^m)$ such that $C \equiv Id \pmod{\pi}$ and $C\trho
C^{-1} = (Id + \pi^{m-1}u)\trho$. One can find such matrix by taking $C =
\left(\begin{smallmatrix}
    1+\pi\alpha&\pi\beta\\
    \pi\gamma&1+\pi\delta
\end{smallmatrix}\right)$ 
and explicitly computing $C \trho = (Id + \pi^{m-1}u)\trho C$ at $\sigma$ and $\tau$.  
In this way, one finds out that if $v(x-y) > v(a-b)$ 
then the cocycles $u_2$ and $u_3$ sending $\sigma$ to $e_2$
and $e_3$ respectively and $\tau$ to $0$ act trivially on the reductions of elements of $C_q$. 
The corresponding base change matrices $C_i$ that conjugate $(Id+\pi^{m-1}u_i)\trho$ into $\trho$ are
\[
C_2 = \begin{pmatrix}
         1&\frac{\pi^{m-1}}{a-b}\\
         0&1
        \end{pmatrix} \hspace{0.2cm} \text{and} \hspace{0.2cm} C_3 = \begin{pmatrix}
         1&0\\
         -\frac{\pi^{m-1}}{a-b}&1
        \end{pmatrix}.
\]
It remains to check what happens when $v(\phi_1(\tau) - \phi_2(\tau)) \leq v(\phi_1(\sigma) - \phi_2(\sigma))$. 
Observe that we can always assume that $v(\phi_1(\tau) - \phi_2(\tau)) = v(\phi_1(\tau) - \phi_2(\tau))$ in this case, by simply 
changing $\sigma$ for $\tau\sigma$. 
In this case we can take $u_2$ sending $\sigma$ to $e_2$ and $\tau$ to 
$\lambda e_2$ and $u_3$ sending $\sigma$ to $e_3$ and $\tau$ to $\lambda e_3$ for $\lambda = (x-y)/(a-b) \in \F$ 
(notice that this does not depend on $\trho$). 
Again, the action of both cocycles will be trivial and the base change
matrices will be the same as before.\\

It remains to consider the case where $\rho \simeq
\left(\begin{smallmatrix}
    \phi & (\phi-1)/\pi^r\\
    0 & 1\end{smallmatrix} \right)$ for $r> 0$. Let $\alpha = v((\phi-1)/\pi^r) + 2$ and let $C_q$ be the set
of deformations of the mod $\pi^\alpha$ reduction of $\rho$ such that $\rho \simeq
\left(\begin{smallmatrix}
    \gamma\phi & \beta\frac{\gamma\phi -\gamma^{-1}}{\pi^r}\\
    0 & \gamma^{-1}\end{smallmatrix} \right)$ for some $\beta \in
\Om^\times$ and $\gamma$ an unramified character congruent to $1$ modulo $\pi^\alpha$. By doing the exact same calculation as in Lemma
\ref{jordan} it can be proved that the cocycle $u_1$ that sends
$\sigma$ to $0$ and $\tau$ to $e_2$ preserves the reductions of
elements in $C_q$, given that $\rho$ is not bad. We still need two more elements preserving
$C_q$. As in the previous case, we have two cocycles that act
trivially on mod $\pi^m$ reductions of elements of $C_q$ for $m\geq \alpha$. 
Let $\trho$ is a mod $\pi^m$ reduction of some element in
$C_q$ given by $\trho(\sigma) = \left(\begin{smallmatrix}
    a&c\\
    0&b \end{smallmatrix}\right)$ and $\trho(\tau) =
\left(\begin{smallmatrix}
    x&z\\
    0&y\end{smallmatrix}\right)$. 
    
    If $v(a-b) < v(x-y)$, we take $u_2$ sending $\sigma$ to
$e_1$ and $\tau$ to 0 and $u_3$ the one that sends
$\sigma$ to $e_2$ and $\tau$ to $0$. We claim that these act
trivially on $\trho$ if $m\geq v(z)+2$.

    If otherwise $v(a-b) \geq v(x-y)$ we can assume that $v(a-b) = v(x-y)$ as in Lemma \ref{jordan}. 
    Let $$\lambda = \frac{x-y}{a-b} =
\frac{z}{c}.$$ Let $u_2$ be the cocycle that sends $\sigma$ to
$e_1$ and $\tau$ to $\lambda e_1$ and $u_3$ the one that sends
$\sigma$ to $e_2$ and $\tau$ to $\lambda e_2$. These act trivially on $\trho$ if $m\geq v(z)+2$.

It can be checked that in both cases the base change matrices given by 
$$C_2 = \begin{pmatrix}
         1&0\\ \frac{-\pi^{m-1}}{c}&1
       \end{pmatrix} \hspace{0.2cm} \text{and} \hspace{0.2cm} C_3
       = \begin{pmatrix}1+\frac{\pi^{m-1}}{c}&0\\0&1\end{pmatrix}$$
       serve to prove the trivialness of the action of $u_2$ and $u_3$ respectively. This concludes the case.
       
\medskip

{\bf Case 4.2: $\rho$ is Induced.}  Proposition
\ref{prop:localtypesreductions} says that whenever $\rho$ is induced
and $\rrho$ is unramified, $\rrho(\sigma)$ has eigenvalues $1$ and
$-1$ (up to twist) and $q \equiv -1 \pmod{p}$. In this case we have
that $d_1=3$ and $d_2=2$.  We want to find a set $C_q$ and a subspace
$N_q$ of dimension $1$ preserving it. Let $C_q = \set{\rho}$.  As in
the Principal Series case, we will be able to find non trivial
cocycles that act trivially on mod $\pi^m$ reductions of $\rho$ for
$m$ big enough. We split into the two possible families of
$\GL_2(\overline{\Z_p})$-equivalence classes of induced representations given by
Proposition \ref{types}. 

\medskip

$\cdotp$ If $\rho(\sigma) = \left(\begin{smallmatrix}
    0&t\\
    1&0
  \end{smallmatrix}\right)$ and $\rho(\tau) =
\left(\begin{smallmatrix}x&0\\ 0&y\end{smallmatrix}\right)$, it can
be checked that the cocycle $u$ sending $\sigma$ to $0$ and $\tau$ to
$e_2 - e_3$ is non trivial. 
The cocycle $u$ acts trivially modulo $\pi^m$ for all $m \geq v(x-y)+2$ whenever $v(t-1) > v(x-y)$.
 In this case the base change matrix is given by
$$C = \begin{pmatrix}
       1&-\frac{\pi^{m-1}}{x-y}\\
       -\frac{\pi^{m-1}}{x-y}&1\end{pmatrix}.$$
       
If we are in a case where $v(t-1) \leq v(x-y)$ then we can go back to a case where $v(t-1) > v(x-y)$ by twisting $\rho$.
Let $\eta \in 1 + \pi\O$ be an unit such that $v(\eta^2t - 1) > v(x-y)$ and $\gamma:G_q \to \O^\times$ an unramified character
mapping $\sigma$ to $\eta$. If we twist $\rho$ by $\gamma$ then the deformation obtained is equivalent to $\rho'$ sending

$$\rho'(\sigma) = \begin{pmatrix}
                   0&\eta^2t\\
                   1&0
                  \end{pmatrix} \hspace{0.5cm} \text{and} \hspace{0.5cm} \rho'(\tau) = \begin{pmatrix}
                   x&0\\
                   0&y
                  \end{pmatrix} 
$$
 via the base change matrix $$C = \begin{pmatrix}
                                  \eta&0\\0&1
                                 \end{pmatrix}.$$

$\cdotp$ If $\rho(\sigma) = \left(\begin{smallmatrix}
                 -a&c\\
                 b&a
               \end{smallmatrix}\right)$ 
and $\rho(\tau) =
             \left(\begin{smallmatrix} x&z\\
                 0&y\end{smallmatrix}\right),$
as in the previous case, it can be checked that the cocycle $u$ that
sends $\sigma$ to $0$ and $\tau$ to $e_2$ is non trivial. Again, the
action of this cocycle in the reduction modulo $\pi^m$ of $\rho$ is
trivial for $m \geq v(z)+2$. The base change matrix that works for this case is
\[
C = \begin{pmatrix}
       1+\frac{\pi^{m-1}}{z}&\frac{b\pi^{m-1}}{2az}\\
       \frac{c\pi^{m-1}}{2az}&1\end{pmatrix}.
\]

\section{Local deformation theory at $p$}

At the prime $p$ we will impose the deformation condition of being
``nearly ordinary'' (as in \cite{CM}). This section is mainly about
gathering previously done calculations, and all the deformations
appearing are deformations of the local Galois group $G_p$.

\begin{defi}
 We say that a deformation of $G_p$ is ``nearly ordinary'' if its restriction to the inertia subgruop is upper-triangular and its 
 semisimplification is not scalar, i.e. if
 $$\rho|_{I_p} = \begin{pmatrix}
                  \psi_1&*\\
                  0&\psi_2
                 \end{pmatrix}.
 $$ with $\psi_1 \neq \psi_2$.
\end{defi}

We will prove the following theorem.

\begin{thm}
 \label{localatp}
 Let $\rho_n:G_p \to \Om/\pi^n$ be a nearly ordinary deformation and $\rrho$ its mod $\pi$ reduction. There is
 a family of nearly ordinary deformations $C_p$ to characteristic $0$ such that $\rho_n$ is the reduction of a member of $C_p$
 and a subspace $N_p \subseteq \coho^1(G_p,\Ad)$ of codimension equal to $\dim \coho^2(G_p, \Ad)+1$ preserving $C_p$ in the
 sense of Proposition \ref{localtypes}. 
\end{thm}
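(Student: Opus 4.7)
The strategy is to follow the approach of \cite{CM}, where the local nearly ordinary deformation theory is essentially worked out; the present theorem simply repackages it in the language of $C_p$ and $N_p$ used in Proposition \ref{localtypes}. Concretely, I would take $C_p$ to be the set of all nearly ordinary lifts $\rho: G_p \to \GL_2(\Om)$ of $\rho_n$, and $N_p \subseteq \coho^1(G_p, \Ad)$ to be the tangent space of the nearly ordinary deformation functor at $\rrho$.

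The first task is to characterise $N_p$ explicitly. A class $u \in \coho^1(G_p, \Ad)$ lies in $N_p$ precisely when, after possibly adding a coboundary, the restriction $u|_{I_p}$ takes values in the Borel subalgebra $B = \langle e_1, e_2 \rangle \subset \Ad$ determined by the nearly ordinary filtration on $\rrho|_{I_p}$. Equivalently, $N_p$ is the kernel of the natural composition
\[
\coho^1(G_p, \Ad) \longrightarrow \coho^1(I_p, \Ad)^{G_p/I_p} \longrightarrow \coho^1(I_p, \Ad/B)^{G_p/I_p},
\]
where the quotient $\Ad/B$ is a one-dimensional $G_p$-module.

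The dimension count is the next step. Combining the inflation--restriction sequence with Tate local duality and the local Euler characteristic formula for $G_{\Q_p}$ acting on $\Ad$, one obtains
\[
\codim N_p \; = \; \dim \coho^2(G_p, \Ad) + 1.
\]
The $+1$ accounts for the single extra linear condition (the vanishing of the $e_3$-component of $u|_{I_p}$ modulo coboundaries) that the nearly ordinary condition imposes on top of the expected codimension $\dim \coho^2(G_p,\Ad)$ of an unobstructed local deformation problem.

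Finally, I would verify that $N_p$ preserves $C_p$ and that every nearly ordinary mod $\pi^n$ deformation does in fact lift. If $\trho$ is the mod $\pi^m$ reduction of an element of $C_p$ and $u \in N_p$, the characterisation above produces a base change $C \in \GL_2(\Om/\pi^m)$ with $C \equiv I \pmod{\pi}$ conjugating $(1+\pi^{m-1}u)\trho$ into a nearly ordinary representation; the smoothness of the nearly ordinary deformation functor, established in \cite{CM}, then produces an actual characteristic zero lift in $C_p$ whose reduction is $(1+\pi^{m-1}u)\trho$. The main obstacle is precisely this smoothness statement: one must check that the obstruction to extending nearly ordinary deformations lives in a subquotient of $\coho^2(G_p, \Ad)$ that vanishes under the hypothesis $\psi_1 \neq \psi_2$, which is what enables the inductive lifting and closes the argument.
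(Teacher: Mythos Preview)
Your overall strategy---take $C_p$ to be the nearly ordinary lifts, take $N_p$ to be the image of $\coho^1(G_p,U)$ for $U$ the upper-triangular trace-zero matrices, and compute the codimension via local duality and the Euler characteristic---is exactly what the paper does in the generic case. The gap is in your final paragraph: you assert that the nearly ordinary deformation functor is smooth (citing \cite{CM}) and that the obstruction vanishes under the hypothesis $\psi_1 \neq \psi_2$. This is false in one case. When $\rrho$ is \emph{decomposable} and $\psi_1\psi_2^{-1}$ is the mod $p$ cyclotomic character, the universal nearly ordinary deformation ring is \emph{not} smooth; there exist nearly ordinary mod $\pi^s$ deformations that do not lift to characteristic zero, so taking $C_p$ to be \emph{all} nearly ordinary lifts cannot work and $N_p$ does not preserve it. The condition $\psi_1\neq\psi_2$ does not rule this out.

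The paper handles this exceptional case separately: it observes that twisting by the inverse of the lower-right character sets up an equivalence between nearly ordinary fixed-determinant deformations of $\rrho$ and \emph{ordinary} arbitrary-determinant deformations of $\rrho$, and then invokes Proposition~6 of \cite{KR}, where the required smooth quotient of the correct dimension is constructed in that ordinary setting. Your proposal needs either this argument or an equivalent one to close the case $\rrho$ split with $\psi_1\psi_2^{-1}=\bar\chi$.
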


\begin{proof}
 Let $$\rrho \simeq \begin{pmatrix}
                         \psi_1&*\\
                         0&\psi_2
                        \end{pmatrix}.$$
 By twisting $\rrho$ by $\psi_2^{-1}$ we can assume that $\psi_2 = 1$. Let $U$ be the set of upper-triangular $2\times2$ matrices 
 of trace $0$. To prove the theorem we will construct for each possible $\rrho$, the corresponding set and subspace and verify that
 its dimensions satisfy the statement of the theorem. In most of the cases $C_p$ will consist on all nearly ordinary deformations
 of $\rrho$ and $N_p$ will be the image of $\coho^1(G_p,U)$ in $\coho^1(G_p,\Ad)$. 
 
 In order to see this, we simply compute the dimension of the image of $\coho^1(G_p,U)$ in $\coho^1(G_p,\Ad)$ and compare it with
 $\dim \coho^1(G_p,\Ad) - \dim \coho^2(G_p,\Ad) - 1$. We can reduce the computation of all these values to finding the dimensions of
 $\coho^0(G_p,U)$, $\coho^0(G_p,U^*)$, $\coho^0(G_p,\Ad)$ and $\coho^0(G_p,(\Ad)^*)$ by using local Tate duality and the formula for 
 the Euler-Poincare characteristic (which is equal to $2$ for $U$ and $3$ for $\Ad$). 
 The kernel of the map $\coho^1(G_p,U) \to \coho^1(G_p,\Ad)$ induced by $U \subseteq \Ad$ is contained in $\coho^0(G_p,\Ad/U)$, which 
 is equal to $0$ given that $\psi_1 \neq \psi_2$. With all these tools, the required dimensions are easily computed and we obtain 
 that taking $C_p$ as the set of all nearly ordinary deformations and $N_p$ as the image of $\coho^1(G_p,U)$ in $\coho^1(G_p,\Ad)$ works
 for all cases but the one in which $\rrho$ is decomposable and $\psi_1$ is the cyclotomic character (recall we are assuming $\psi_2 = 1$).
 

 In this case the universal ring for nearly ordinary deformations is not smooth and $C_p$ is not preserved by $N_p$ as there are some mod $\pi^s$
 nearly ordinary deformations of $\rrho$ that do not lift back to characteristic $0$. We need to take a smaller set $C_p$ in this case. 
 In order to solve this, we claim that the universal deformation ring for nearly ordinary lifts of $\rrho$ with fixed determinant $\psi$ is 
 isomorphic to the universal deformation ring for \textbf{ordinary} lifts of $\rrho$ with arbitrary determinant. To see this, just observe
 that from any nearly ordinary deformation of $\rrho$ to a ring $A$ we can obtain an ordinary lift of $\rrho$ by twisting by inverse of the 
 character appearing in the place $(2,2)$. To go the other way round, if we have an ordinary deformation $\tilde{\rho}$ of $\rrho$ to $A$
 given by $$ \tilde{\rho} = \begin{pmatrix}
                             \omega_1&*\\0&\omega_2
                            \end{pmatrix}$$
where $\omega_2$ is unramified and want to obtain a nearly ordinary deformation of $\rrho$ with determinant $\psi$, we need to twist by a square root of
$\psi(\omega_1\omega_2)^{-1}$. This character has a square root by Hensel's lemma, as its reduction modulo the maximal ideal is $1$ which has a
square root. Given this identification, the wanted result follows from Proposition $6$ of \cite{KR}, where the same result is proven for the
ordinary arbitrary determinant case.
 \end{proof}


\section{Global deformation theory}

In this section we prove the one of the main results of this article.

\begin{thm} 
 \label{abstract}
 Let $n \geq 2$ be an integer and $\rho_n: G_\Q \to \GL_2(\Om/\pi^n)$
 be a continuous representation which is odd and nearly
 ordinary at $p$.  Assume that
 $\Img(\rho_n)$ contains $\SL_2(\Om/p)$ if $n \ge e$ and $\SL_2(\Om/\pi^n)$ otherwise. 
 Let $P$ be a set of primes of $\Q$ containing
 the ramification set of $\rho_n$.  For each $v \in P \backslash
 \set{p}$ fix a local deformation $\rho_v: G_v \to \GL_2(\Om)$ that
 lifts $\rho_n|_{G_v}$ and is not bad (see Definition~\ref{bad}).
 Then there exists a continuous representation $\rho: G_\Q \to
 \GL_2(\Om)$ and a finite set of primes $R$ such that:
 \begin{itemize}
  \item $\rho$ lifts $\rho_n$, i.e. $\rho \equiv \rho_n \pmod{\pi^n}$.
  \item $\rho$ is unramified outside $P \cup R$.
  \item For every $v\in P$, $\rho|_{I_v} \simeq \rho_v|_{I_v}$ over $\GL_2(\Om)$.
  \item $\rho$ is nearly ordinary at $p$.
  \item All the primes of $R$, except possibly one, are not congruent to $1$ modulo $p$.
 \end{itemize}
\end{thm}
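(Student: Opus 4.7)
My plan is to combine the local data produced in Sections 2 and 3 with a Khare--Wintenberger--Ramakrishna style global argument, modified so that the local-to-global restriction map in (\ref{eq:iso}) is surjective with one-dimensional kernel (rather than an isomorphism). The extra codimension at $p$ coming from Theorem~\ref{localatp} is exactly what produces that one-dimensional kernel, matching the universal ring $W(\F)[[X]]$ advertised in the introduction. I split the argument according to whether $n$ already exceeds the threshold $\alpha$ at which the $N_v$'s preserve reductions of $C_v$.

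\textbf{Local set-up.} For each $v \in P \setminus \set{p}$, since $\rho_v$ is not bad, Proposition~\ref{localtypes} provides an integer $\alpha_v$, a deformation class $C_v$ containing $\rho_v$, and a subspace $N_v \subseteq \coho^1(G_v,\Ad)$ of codimension $\dim \coho^2(G_v,\Ad)$ that preserves the mod $\pi^m$ reductions of elements of $C_v$ for every $m \ge \alpha_v$. At $v = p$, Theorem~\ref{localatp} gives a class $C_p$ of nearly ordinary deformations together with a subspace $N_p$ of codimension $\dim\coho^2(G_p,\Ad)+1$ preserving $C_p$. Let $\alpha = \max_{v \in P} \alpha_v$.

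\textbf{The case $n \ge \alpha$.} The Wiles--Greenberg formula applied with the Selmer system $\N = \{N_v\}_{v \in P}$ yields
\[
\dim \coho^1_{\N}(G_P,\Ad) - \dim \coho^1_{\N^\perp}(G_P,\Ad^*) = 1,
\]
the surplus $+1$ being contributed by the extra codimension at $p$. Using the large image hypothesis on $\rho_n$ and a Chebotarev argument (as in \cite{arxiv} and \cite{ravi3}), I select a finite set $R$ of auxiliary primes, one for each generator of the dual Selmer group, each chosen to be a ``trivial prime'' $q$ with $q \not\equiv 1 \pmod p$ (this last congruence can be arranged for all but at most one prime, the usual obstruction depending on whether $\F(\chi)$ lies inside the splitting field of the dual cocycle). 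At each such $q$ we equip $\rho_n|_{G_q}$ with the standard ramified trivial-prime deformation class and its preserving subspace $N_q$, so that the augmented restriction map
\[
\coho^1_{\N_{P\cup R}}(G_{P\cup R},\Ad) \longrightarrow \bigoplus_{v \in P\cup R} \coho^1(G_v,\Ad)/N_v
\]
becomes surjective with one-dimensional kernel and the dual Selmer group vanishes. Standard obstruction theory then lets me inductively lift $\rho_n$ from mod $\pi^m$ to mod $\pi^{m+1}$ for $m \ge n$: the $\coho^2$ obstruction vanishes globally because each local obstruction lies in the piece killed by $N_v$, and the one-parameter ambiguity in the global lift is used at each step to push the local components back into $C_v \pmod{\pi^{m+1}}$, which is possible precisely because $m \ge \alpha$. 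Passing to the inverse limit gives the desired $\rho: G_\Q \to \GL_2(\Om)$.

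\textbf{The case $n < \alpha$.} Here the $N_v$'s need not preserve reductions of $C_v$ at intermediate exponents, so the previous induction can fail before we reach exponent $\alpha$. Following \cite{ravilarsen}, I bridge the gap by a finer induction: for each $m$ with $n \le m < \alpha$, I enlarge the ramification set by a Chebotarev-produced set of auxiliary primes (again of the form $q \not\equiv 1 \pmod p$, up to one exceptional prime) providing just enough fresh cohomology to absorb the single obstruction to lifting mod $\pi^m$ to mod $\pi^{m+1}$ while respecting the local conditions at $P$. After finitely many such enlargements I reach a mod $\pi^\alpha$ lift $\rho_\alpha$ whose restrictions at $v \in P$ still lie in $C_v$ mod $\pi^\alpha$; the previous paragraph, applied to $\rho_\alpha$, then produces the required $\rho$. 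The final set $R$ is the union of all auxiliary primes introduced in the two stages.

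\textbf{Main obstacle.} The delicate point is the second stage: because the $N_v$ do not yet stabilise $C_v$ for $m < \alpha$, one must track, at each intermediate exponent, which local cohomology classes remain usable for adjustment at the primes of $P$, and then produce Chebotarev primes compatible simultaneously with the large-image hypothesis, the avoidance of $q \equiv 1 \pmod p$, and the requirement that the accumulated ramification does not destroy the local conditions fixed at $P$. Controlling the interaction of these constraints across all $m \in [n,\alpha)$ is the principal technical difficulty; the remaining steps essentially mirror the arguments in \cite{arxiv} and \cite{KR}.
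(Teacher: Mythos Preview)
Your two-stage skeleton (first reach exponent $\alpha$ \`a la \cite{ravilarsen}, then run a Ramakrishna-style argument with a one-dimensional Selmer kernel) is exactly the paper's plan. But there is a genuine gap in both stages: you never identify the cohomology class $f\in\coho^1(G_\Q,\Ad)$ attached to $\rho_n\bmod\pi^2$. Since $e>1$, the ring $\Om/\pi^2$ is the dual numbers over $\F$, so $\rho_2$ \emph{is} a first-order deformation of $\rrho$ and hence a nonzero class $f$. This class is the whole point of the ramified-coefficient story.

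In the $n\ge\alpha$ stage, the one-dimensional kernel is not produced abstractly by Wiles--Greenberg bookkeeping; it is forced to be exactly $\langle f\rangle$. For every prime $v\in P$ the class $f|_{G_v}$ lies in $N_v$ (the local conditions were built to contain $\rho_2$), and for every nice auxiliary prime $q$ one checks $f|_{G_q}\in N_q$ as well. Thus $f$ can never be removed from the kernel by adding nice primes, and one must verify that after killing the dual Selmer group the kernel is \emph{no larger} than $\langle f\rangle$. Your proposal does not address either direction. Incidentally, it is the surjectivity of the restriction map that lets you push the local pieces back into $C_v$ at each step; the one-dimensional kernel is not ``used'' for this---it just accounts for the extra variable in $W(\F)[[X]]$.

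In the $n<\alpha$ stage the same class $f$ obstructs a direct appeal to \cite{ravilarsen}: if $f\in\sha^1_P(\Ad)$, no nice prime can detect it (nice primes have $f|_{G_q}=0$), so one cannot annihilate $\sha^1$ using nice primes alone. The paper introduces a new kind of auxiliary prime, a \emph{special prime} for $f$ with $q\equiv 1\pmod{\pi^n}$ and $\rho_n(\sigma_q)=\left(\begin{smallmatrix}1&\pi\\0&1\end{smallmatrix}\right)$, precisely so that $f|_{G_q}\neq 0$. That single special prime is the source of the ``except possibly one'' clause in the theorem---not the $\F(\chi)$ phenomenon you invoke. Proving that such a prime exists and is Chebotarev-compatible with the other constraints requires a group-theoretic computation (the commutator of the kernel of $\SL_2(\Om/\pi^n)\to\SL_2(\F)$ is the $\pi^2$-congruence subgroup), which is the actual technical core of this stage. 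Without isolating $f$ and the special prime, the bridging argument does not go through.
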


The proof of this theorem essentially consists on finding a way to
lift $\rho_n$ to characteristic $0$ one power of $\pi$ at a time.
We will split the proof into two sections, essentially because the
local results we have so far are split in two different cases
depending on whether the exponent $m$ in each step is big enough or not.  
Let $\alpha$ be the integer obtained in the following way:
for each $v \in P$, Proposition \ref{localtypes} gives an integer $\alpha_v$ such that
there is a set $C_v$ containing $\rho_v$ and a subspace $N_v$ preserving its reductions mod $\pi^n$ for $n \geq \alpha_v$.
Let $\alpha$ be the maximum of the $\alpha_v$'s for $v\in P$.
When lifting from $\pi^m$ to $\pi^{m+1}$ for $m \geq \alpha$ we are in a global setting similar
to the one in \cite{ravi3}. The existence of the sets $C_v$ and
subspaces $N_v$ let us mimic the argument given there. When working
modulo $m$ for $m < \alpha$, we do not count with these sets and subspaces for all the primes of $P$  
and therefore are unable to overcome the local
obstructions in the same fashion as before. In this case, we will follow the ideas from
\cite{ravilarsen} and will lift to $\Om/\pi^\alpha$ by adding a finite number of auxilliary primes at each power of $\pi$.

\medskip 

As $\Om$ is the ring of integers of a ramified extension we have that
$\Om/\pi^2$ is isomorphic to the dual numbers and therefore the
projection mod $\pi^2$ of $\rho_n$ defines an element in
$\coho^1(G_\Q, \Ad)$ which we will call $f$. Observe that our hypotheses imply that the 
image of the projection of $\rho_n$ to $\Om/\pi^2$ contains $\SL_2(\Om/\pi^2)$ 
and thus $f \neq 0$ as an element in $\coho^1(G_\Q,\Ad)$.

\subsection{Getting to mod $\pi^{\alpha}$}

Assume that the exponent $n$ we start with is strictly smaller than
the natural number $\alpha$ from Proposition \ref{localtypes} (if this
is not the case we are done). The idea is to adjust the main argument
of \cite{ravilarsen} to our situation. Recall the following
definition.

\begin{defi}
  A prime $q$ is \emph{nice} for $\rrho$ if it satisfies the following properties
 \begin{itemize}
 \item The prime $q$ is not congruent to $\pm 1$ mod $p$.
 \item The representation $\rho_n$ is unramified at $q$.
 \item The eigenvalues of $\rrho(\sigma)$ have ratio $q$.
 \end{itemize}
 We say that $q$ is nice for $\rho_n$ if furthermore
 \begin{itemize}
  \item The eigenvalues of $\rho_n(\sigma)$ have ratio $q$.
 \end{itemize}
 
 At a nice prime $q$ we consider the set
\[
C_q= \{\text{deformations } \rho \text{ of }\rrho|_{G_q} \; : \;
\rho(\sigma) = \left(\begin{smallmatrix} q&0\\0&1
 \end{smallmatrix}\right)\},
\]
and the subspace $N_q \subseteq \coho^1(G_q,\Ad)$ generated by the
cocycle $u$ sending $\sigma$ to 0 and $\tau$ to
$e_2$. It is
easy to check that $u$ preserves the reductions of elements of $C_q$.
  
\end{defi}

The work on \cite{ravilarsen} is based on the existence of nice primes
that are either zero or non zero at certain elements of both
$\coho^1(G_U,\Ad)$ and $\coho^1(G_U,(\Ad)^*)$ for different sets of
primes $U$. We claim that the same arguments work in this settings
except for the cases when the element $f \in \coho^1(G_P,\Ad)$
attached to $\rho_n$ mod $\pi^2$ is involved. We will sort this obstacle by adding
the following primes.

\begin{defi}
 We say that a prime $q$ is a \textit{special prime} for $f$ if 
 \begin{itemize}
 \item The representation $\rho_n$ is unramified at $q$.
 \item $\rho_n(\sigma) = \left(\begin{smallmatrix}
1 & \pi \\
0 & 1
\end{smallmatrix}\right)$.
\item The prime $q \equiv 1 \pmod{\pi^n}$. 
 \end{itemize}
\end{defi}
Note that for such primes $q$ we have that $f|_{G_q} \neq 0$ as an element in $\coho^1(G_q,\Ad)$. We need some partial results to state and prove the main result of this case (Theorem~\ref{lifta}).

\begin{lemma}
\label{nice}
 Let $f,f_1,\ldots, f_r \in \coho^1(G_\Q,\Ad)$ and $\phi_1, \ldots, \phi_s \in \coho^1(G_\Q, (\Ad)^*)$ be linearly independent sets.
 \begin{enumerate}[a)]
  \item Let $I\subseteq \set{1,\ldots r}$ and $J \subseteq \set{1,\ldots,s}$. There is a Chebotarev set of primes $v$ such that
  \begin{itemize}
   \item $v$ is nice for $\rho_n$.
   \item $f_i|_{G_v} \neq 0$ if $i\in I$ and $f_i|_{G_v} = 0$ if $i\notin I$.
   \item $\phi_j|_{G_v} \neq 0$ if $j\in J$ and $\phi_j|_{G_v} = 0$ if $j\in J$
  \end{itemize}
  \item Also, there is a Chebotarev set of primes $w$ such that
  \begin{itemize}
   \item $w$ is a special prime for $f$, henceforth $f|_{G_w} \neq 0$.
   \item $f_i|_{G_w} = 0$ for all $1 \leq i \leq r$.
  \end{itemize}
 \end{enumerate}
\end{lemma}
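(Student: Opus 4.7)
The plan is to realize both statements as Chebotarev density conditions on a suitable large Galois extension and then to produce the required Frobenius elements by adjusting a first candidate. Let $K = \overline{\Q}^{\ker \rho_n}$, and let $L/\Q$ be a finite Galois extension containing $K$, the cyclotomic field $\Q(\zeta_{p^N})$ for $N$ large enough that the congruence $w\equiv 1 \pmod{\pi^n}$ becomes a Galois-theoretic condition, and the fixed fields of the kernels of the cocycles $f_i$ and $\phi_j$ (viewed as homomorphisms after pulling back to an extension of $K$). At an unramified prime $v$, the restrictions $f_i|_{G_v}$ and $\phi_j|_{G_v}$ are determined by the conjugacy class of the Frobenius $\sigma_v$ in $\Gal(L/\Q)$, as are the Frobenius-eigenvalue conditions defining niceness and specialness.

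For part (a), I would first exhibit an element $\sigma_0 \in \Gal(L/\Q)$ whose image in $\GL_2(\Om/\pi^n)$ is unramified with eigenvalues of ratio $q_0\not\equiv \pm 1 \pmod p$; the hypothesis that $\Img(\rho_n)$ contains $\SL_2$ of the appropriate quotient together with $p>5$ guarantees such elements exist in abundance. Restricting the $f_i$ and $\phi_j$ to $H := \Gal(L/K)$ yields elements of $\Hom(H,\Ad)$ and $\Hom(H,(\Ad)^*)$ that remain linearly independent as $\Gal(K/\Q)$-equivariant homomorphisms, since $\Ad$ is an irreducible $\Gal(K/\Q)$-module when the image of $\rrho$ contains $\SL_2(\F)$ and $p>5$, and $\Ad \not\simeq (\Ad)^*$ as $\Gal(K/\Q)$-modules. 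By adjusting $\sigma_0$ by a suitably chosen $h \in H$, one arranges the values $f_i(\sigma_0 h)$ and $\phi_j(\sigma_0 h)$ to lie or not lie in the corresponding local coboundary subspaces, according to the prescribed sets $I$ and $J$. Chebotarev density then produces a positive density set of primes $v$ whose Frobenius lies in the conjugacy class of $\sigma_0 h$.

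Part (b) proceeds by the same strategy, but now the target image of Frobenius in $\GL_2(\Om/\pi^n)$ is the prescribed matrix $\left(\begin{smallmatrix}1&\pi\\0&1\end{smallmatrix}\right)$, and the image in $\Gal(\Q(\zeta_{p^N})/\Q)$ is trivial (encoding $w\equiv 1 \pmod{\pi^n}$). These two conditions are compatible: the cyclotomic character factors through the abelian quotient of the image, and the prescribed unipotent lies in the kernel of the reduction of the determinant. After fixing such a $\sigma_0$, I would again adjust by $H$ to arrange that $f_i(\sigma_0 h)$ is a coboundary for the chosen Frobenius (so that $f_i|_{G_w}=0$) for every $i$, invoking the same equivariant Goursat-type argument. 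The non-vanishing $f|_{G_w}\neq 0$ is automatic: our hypothesis on the image of $\rho_n$ ensures $f$ is a nonzero class in $\coho^1(G_\Q,\Ad)$, and evaluated at the prescribed non-trivial unipotent Frobenius it produces a nonzero element not in $(\sigma_w-1)\Ad$.

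The main obstacle in both parts is the adjustment step: to prescribe arbitrary vanishing patterns for the $f_i$ (and $\phi_j$) at the chosen Frobenius, I will need the map $H \to \Ad^{r}\oplus (\Ad)^{*s}$ assembled from all the cocycles, viewed $\Gal(K/\Q)$-equivariantly, to have image whose projections surject onto the respective cosets of the local coboundary loci. Via Goursat's lemma this reduces to checking that no nontrivial $\Gal(K/\Q)$-linear combination of the $f_i$ (resp. the $\phi_j$) vanishes on $H$, which is exactly our linear independence assumption together with the irreducibility and non-isomorphism of $\Ad$ and $(\Ad)^*$ as $\Gal(K/\Q)$-modules.
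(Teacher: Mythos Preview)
Your overall architecture matches the paper's: realize both parts as Chebotarev conditions on a large Galois extension, fix a candidate Frobenius with the prescribed image under $\rho_n$ (and under the cyclotomic character), and then adjust by an element of $H$ on which the cocycles become equivariant homomorphisms. The gap is in the adjustment step, where you assert that $f_1|_H,\ldots,f_r|_H$ (and the $\phi_j|_H$) remain linearly independent.

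You justify this as ``exactly our linear independence assumption together with the irreducibility and non-isomorphism of $\Ad$ and $(\Ad)^*$.'' But you have taken $K=\overline{\Q}^{\ker\rho_n}$, not $K_0:=\Q(\Ad)\Q(\mu_p)$, and the whole point of the ramified setting is that the restriction map $\coho^1(G_\Q,\Ad)\to\Hom_{G_\Q}(G_K,\Ad)$ has \emph{nonzero} kernel: the class $f$ attached to $\rho_n\pmod{\pi^2}$ dies there, since $\rho_n$ is trivial on $G_K$. What you actually need is that this kernel is \emph{exactly} $\langle f\rangle$; the hypothesis that $\{f,f_1,\dots,f_r\}$ is independent in $\coho^1(G_\Q,\Ad)$ only tells you no nonzero $\sum a_i f_i$ is a multiple of $f$, not that nothing else lies in the kernel. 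Irreducibility of $\Ad$ and $\Ad\not\simeq(\Ad)^*$ suffice to run Goursat over $K_0$, but over your $K$ there is the extra layer $\Gal(K/K_0)$, which the paper identifies with $\HH:=\ker\bigl(\PGL_2(\Om/\pi^n)\to\PGL_2(\F)\bigr)$, and one must show that $\HH$ admits only a one-dimensional space of $\Gal(K_0/\Q)$-equivariant homomorphisms to $\Ad$ and none to $(\Ad)^*$. The paper supplies exactly this missing ingredient by computing (Lemma~\ref{conm2}) that $[\HH,\HH]$ is the subgroup of matrices $\equiv 1\pmod{\pi^2}$, so that $\HH^{\mathrm{ab}}$ is a single copy of $\Ad$ and hence $K'\cap LM = L_f$ in the paper's notation. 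This commutator computation is the genuinely new content of the ramified case---over $W(\F)$ the analogous kernel vanishes and your sketch would be complete---and it cannot be absorbed into the phrase ``linear independence plus irreducibility.''
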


\begin{remark}
  For special primes we can also define a set $C_v$ of deformations to
  characteristic $0$ and a subspace $N_v \subseteq \coho^1(G_v,\Ad)$
  of codimension $\dim \coho^2(G_v,\Ad)$ preserving it. This is explicitly done in Lemma 4.1
  of \cite{arxiv}. The procedure is the same as the one employed in cases 4.1 and 4.2 of 
  Section $2$. We will not reproduce the results here in an effort to preserve elegance. Notice that
  this corresponds to the case of a trivial local $\rrho$ lifting to a Steinberg $\rho$.
\end{remark}

\begin{proof}
  This is a slight modification of Fact 5 of \cite{ravilarsen}. The
  main problem with nice primes in ramified extensions is that if $v$
  is a nice prime then $f|_{G_v} = 0$. The use of special primes for
  $f$ solves this problem, since almost by definition if $v$ is a
  special prime, $f|_{G_v} \neq 0$.  In order to check that Chebotarev
  conditions at the different $f_i$'s and $\phi_j$'s are disjoint from
  the condition of being nice for $\rho_n$, and that this last
  condition only overlaps with extension corresponding to the element
  $f$, we need to understand the Galois structure of the corresponding
  extensions. We will prove a ramified version of Lemma 5.8 of
  \cite{arxiv}.  Following the notation of that article (which is the
  original notation of \cite{ravi2}), let $K = \Q(\Ad)\Q(\mu_p)$.  For
  each $f_i$ let $L_i$ the extension of $K$ given by its kernel and
  for each $\phi_j$ let $M_j$ be the corresponding one.  Finally let $K' =
  K \cdot\Q(\Adn)$ and $L_f$ be the extension of $K$ given by $f$. Let $L =
  L_f\prod L_i$ and $M = \prod M_j$. We claim that $K' \cap LM = L_f$.
  
  For this, let $\HH  = \Gal(K'/K) \subseteq \PGL_2(\Om/\pi^n)$ and $\pi_1:
  \PGL_2(\Om/\pi^n) \to \PGL_2(\F)$.  Observe that $\HH$ consists on the
  classes of matrices in $\Img(\rho_n)$ which are trivial in
  $\PGL_2(\F)$, i.e. $\HH = \Img(\Adn) \cap \Ker(\pi_1)$.  Recall that
  our hypotheses imply $\PSL_2(\Om/\pi^n) \subseteq \Img(\Adn)
  \subseteq \PGL_2(\Om/\pi^n)$, and therefore $\PSL_2(\Om/\pi^n)\cap
  \Ker(\pi_1) \subseteq \HH \subseteq \Ker(\pi_1)$. As
  $[\PSL_2(\Om/\pi^n):\PGL_2(\Om/\pi^n)] = 2$ and $\Ker(\pi_1)$ is a
  $p$ group we have that $\HH = \Ker(\pi_1)$.

  Recall that $\Gal(F/K) \simeq (\Ad)^r \times (\Ad^*)^s$ as
  $\Z[G_\Q]$-module and by Lemma $7$ of \cite{ravi2}, this is
  its decomposition as $\Z[G_\Q]$ simple modules. This implies that
  $K' \cap LM$ is the direct sum of the quotients of $\Gal(K'/K) \simeq \HH$ isomorphic to $\Ad$ or $(\Ad)^*$.
  To prove that the only such quotient is $\Gal(L_f/K)$ observe that any 
  surjective morphism $\HH \to \Ad$ or $(\Ad)^*$ must contain $[\HH:\HH]$ inside its kernel. We will
  prove in Lemma \ref{conm2} below that such conmutator is equal to the subgroup of $\HH$ formed
  by the matrices congruent to the identity modulo $\pi^2$. This finishes the proof as implies that
  such quotient necessarily factors through $\Gal(L_f/K)$.

\end{proof}

\begin{lemma}
 \label{conm2}
If $H \subseteq \SL_2(\Om/\pi^n)$ is the subgroup consisting of matrices 
congruent to the identity modulo $p$ then its commutator subgroup $[H:H]$ is 
the subgroup $H'$ of $\SL_2(\Om/\pi^n)$ formed by the matrices congruent to the
identity modulo $\pi^2$. 
\end{lemma}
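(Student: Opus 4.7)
The natural approach is to filter $H$ by the principal congruence subgroups. For $1 \le k \le n$, let
\[
H_k := \{A \in \SL_2(\Om/\pi^n) : A \equiv I \pmod{\pi^k}\},
\]
so that $H = H_1$ and $H' = H_2$, and these form a descending chain of normal subgroups with $H_n = \{I\}$. The plan is to compute commutators on the associated graded pieces and reduce the whole question to the perfectness of $\mathfrak{sl}_2(\F)$.

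The first step is the inclusion $[H, H] \subseteq H'$ and the more general graded commutator formula. For $A = I + \pi^a X \in H_a$ and $B = I + \pi^b Y \in H_b$, expanding $A^{-1} = I - \pi^a X + \pi^{2a} X^2 - \cdots$ and $B^{-1}$ similarly and multiplying out $ABA^{-1}B^{-1}$ modulo $\pi^{a+b+1}$ yields
\[
[A,B] \equiv I + \pi^{a+b}[X,Y] \pmod{\pi^{a+b+1}}.
\]
Setting $a = b = 1$ gives $[H,H] \subseteq H_2 = H'$, and the same formula identifies the induced commutator on graded pieces with the Lie bracket once we identify $H_k/H_{k+1} \cong \mathfrak{sl}_2(\F)$ via $I + \pi^k X \mapsto X \bmod \pi$. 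The identification uses that $\det(I + \pi^k X) = 1$ forces $\mathrm{tr}(X) = -\pi^k \det(X)$, hence $\mathrm{tr}(X) \equiv 0 \pmod{\pi}$, and conversely that every trace-zero class lifts to an element of $H_k$ by iteratively correcting the trace by successive powers of $\pi$ to satisfy the full determinant condition.

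For the reverse inclusion $H' \subseteq [H,H]$, I would argue by descending induction on $k$ that $H_k \subseteq [H,H]$ for $k = n, n-1, \ldots, 2$. The base case $H_n = \{I\}$ is trivial. For the inductive step, take $C \in H_k$ and consider its image $\bar C \in H_k/H_{k+1} \cong \mathfrak{sl}_2(\F)$. Since $p > 5$ (and in fact any residue characteristic different from $2$), $\mathfrak{sl}_2(\F)$ is a perfect Lie algebra, so we may write $\bar C = \sum_i [\bar X_i, \bar Y_i]$ with $\bar X_i, \bar Y_i \in \mathfrak{sl}_2(\F)$. Lifting each $\bar X_i$ to some $A_i \in H_1$ and each $\bar Y_i$ to some $B_i \in H_{k-1}$ (permissible because $k - 1 \ge 1$), the graded commutator formula from Step 1 gives $D := \prod_i [A_i, B_i] \in H_k$ with image $\bar C$ in $H_k/H_{k+1}$. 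Hence $CD^{-1} \in H_{k+1} \subseteq [H,H]$ by the inductive hypothesis, while $D \in [H,H]$ by construction, so $C \in [H,H]$.

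The main technical point that needs care is the graded commutator formula itself: it must gain exactly one power of $\pi$ beyond the naive filtration bound $[H_a, H_b] \subseteq H_{a+b}$, which is what makes the descending induction improve by a single $\pi$ at each step. This is a standard but careful expansion of $ABA^{-1}B^{-1}$, and once verified the only structural input needed is the perfectness of $\mathfrak{sl}_2(\F)$, which is immediate whenever $p \neq 2$.
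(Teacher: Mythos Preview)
Your argument is correct and is a genuinely different route from the paper's. The paper proceeds by explicit commutator identities: it first observes that $H/H'$ is abelian, and for the reverse inclusion it writes down specific commutators in $H$, such as
\[
\left[\begin{pmatrix}1&\pi a\\0&1\end{pmatrix},\begin{pmatrix}1+\pi&0\\0&(1+\pi)^{-1}\end{pmatrix}\right]=\begin{pmatrix}1&-\pi^2 a(\pi+2)\\0&1\end{pmatrix},
\]
and analogous ones for lower-triangular and diagonal shapes, thereby exhibiting a generating set of $H'$ inside $[H,H]$. Your approach instead filters by the principal congruence subgroups $H_k$, identifies each graded piece with $\mathfrak{sl}_2(\F)$, checks that the group commutator induces the Lie bracket on graded pieces, and then runs a descending induction using that $\mathfrak{sl}_2(\F)$ is perfect for $p\neq 2$. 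Your method is cleaner and transparently generalizes (to other split semisimple groups, or to deeper congruence levels), while the paper's computation is more elementary in that it never names a Lie algebra and gives the needed elements by hand. One small remark: your closing sentence about the commutator formula ``gaining exactly one power of $\pi$ beyond the naive filtration bound $[H_a,H_b]\subseteq H_{a+b}$'' is phrased oddly; what you actually use is that $[H_a,H_b]\subseteq H_{a+b}$ \emph{is} the bound, together with the identification of the image in $H_{a+b}/H_{a+b+1}$ with the Lie bracket, and that is exactly what your expansion establishes.
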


\begin{proof}
 It is easy to check that $H/H'$ is abelian, implying that $[H:H] \subseteq H'$.
 For the other inclusion, observe that $H'$ is generated by the set of elements 
 of the form
 
$$\begin{pmatrix}
  1&\pi^2x\\
  0&1
\end{pmatrix},\hspace{0,25cm} \begin{pmatrix}
  1&0\\
  \pi^2y&1
\end{pmatrix} 
\hspace{0,15cm} \text{and} \hspace{0,15cm}
\begin{pmatrix}
  1+\pi^2z&0\\
  0&(1+\pi^2z)^{-1}
\end{pmatrix} $$

for $x,y,z \in \Om$. It is easy to verify the following identities
for any $a,b \in \Om$:
\begin{itemize}
\item $\left[\left(\begin{smallmatrix}
  1&\pi a\\
  0&1
\end{smallmatrix}\right):\left(\begin{smallmatrix}
  1+\pi&0\\
  0&(1+\pi)^{-1}
\end{smallmatrix}\right) \right] = \left(\begin{smallmatrix}
  1&-\pi^2a(\pi+2)\\
  0&1
\end{smallmatrix}\right) \in [H:H]$.
\item $\left[\left(\begin{smallmatrix}
  1&0\\
  \pi b&1
\end{smallmatrix}\right):\left(\begin{smallmatrix}
  1+\pi&0\\
  0&(1+\pi)^{-1}
\end{smallmatrix}\right) \right] = \left(\begin{smallmatrix}
  1&0\\
  b\pi^2\tfrac{\pi+2}{(\pi+1)^2}&1
\end{smallmatrix}\right) \in [H:H]$.
\end{itemize}

\vspace{0,2cm} This shows that the first two families of generators of
$H'$ lie inside $[H:H]$.  In order to prove that $[H:H] = H'$ it only
remains to check that $\left(\begin{smallmatrix}
    1+\pi^2z&0\\
    0&(1+\pi^2z)^{-1}
\end{smallmatrix}\right) \in [H:H]$ for any $z \in \Om$. But
$\left[\left(\begin{smallmatrix}
  1&0\\
  \pi&1
\end{smallmatrix} \right):\left(\begin{smallmatrix}
  1&\pi c\\
  0&1
\end{smallmatrix} \right) \right] = \left(\begin{smallmatrix}
  1-c\pi^2&c^2\pi^3\\
  -c\pi^3&c^2\pi^4 + c\pi^2+1
\end{smallmatrix} \right) \in [H:H]$.
Multiplying this element by matrices of the form $\left(\begin{smallmatrix}
  1&0\\
  \pi^2x&1
\end{smallmatrix}\right)$ we get $\left(\begin{smallmatrix}
  1-c\pi^2&c^2\pi^3\\
  0&(1-c\pi^2)^{-1}
\end{smallmatrix}\right) \in [H:H]$
(as we can raise the power of $\pi$ appearing in the place $(2,1)$
eventually making it $0$ modulo $\pi^n$). The same argument applies to
matrices of the form $\left(\begin{smallmatrix}
    1&\pi^2y\\
    0&1
\end{smallmatrix}\right)$ we get $\left(\begin{smallmatrix}
  1-c\pi^2&0\\
  0&(1-c\pi^2)^{-1}
\end{smallmatrix}\right) \in [H:H]$.
\end{proof}

Lemma \ref{nice} will let us prove the existence of auxilliary primes that kill global obstructions. We introduced special primes
because otherwise we would have not been able to modify the behaviour of $f$.

\begin{lemma}
  Let $\rho_n$ and $P$ as before. Then there exists a finite set $P'$
  consisting of nice primes for $\rho_n$ and eventually one special
  prime for $f$ such that $\sha^1_{P\cup P'}(\Ad)$ and $\sha^2_{P\cup
    P'}(\Ad)$ are both trivial.
\end{lemma}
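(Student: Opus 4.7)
The approach is the standard Ramakrishna-style killing of obstruction classes: enlarge $P$ by auxiliary primes until every Shafarevich--Tate class becomes locally visible. By Poitou--Tate duality, $\sha^2_T(\Ad) \cong \sha^1_T((\Ad)^*)^\vee$, so it suffices to produce $P'$ with both $\sha^1_{P \cup P'}(\Ad) = 0$ and $\sha^1_{P \cup P'}((\Ad)^*) = 0$. The key input will be Lemma~\ref{nice}, which lets us prescribe, along a Chebotarev set, the vanishing or non-vanishing of any finite collection of classes in $\coho^1(G_\Q, \Ad)$ and $\coho^1(G_\Q, (\Ad)^*)$, even simultaneously.

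First I would fix an ordered basis $\{f_1, \ldots, f_r\}$ of $\sha^1_P(\Ad)$, arranged so that $f_1 = f$ if $f \in \sha^1_P(\Ad)$, and a basis $\{\phi_1, \ldots, \phi_s\}$ of $\sha^1_P((\Ad)^*)$. For each $i \geq 2$, apply Lemma~\ref{nice}(a) with $I = \{i\}$ and $J = \varnothing$ to produce a nice prime $v_i$ for $\rho_n$ satisfying $f_i|_{G_{v_i}} \neq 0$, $f_j|_{G_{v_i}} = 0$ for $j \neq i$, and $\phi_k|_{G_{v_i}} = 0$ for every $k$. For $f_1 = f$ (if present) use Lemma~\ref{nice}(b) to get a special prime $w$ with $f|_{G_w} \neq 0$ and $f_i|_{G_w} = 0$ for $i \geq 2$. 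Symmetrically, for each $j$, use Lemma~\ref{nice}(a) to produce a nice prime $u_j$ with $\phi_j|_{G_{u_j}} \neq 0$, $\phi_k|_{G_{u_j}} = 0$ for $k \neq j$, and $f_i|_{G_{u_j}} = 0$ for every $i$. Let $P'$ be the union of all the primes selected.

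To verify the conclusion, take $c \in \sha^1_{P \cup P'}(\Ad)$. Since $c$ restricts trivially at each prime of $P'$, it is \emph{a fortiori} unramified there, so $c$ is defined in $\coho^1(G_P, \Ad)$ and, being locally trivial at $P$, belongs to $\sha^1_P(\Ad)$. Writing $c = \sum a_i f_i$ and restricting to each $v_i$ (resp.\ $w$) collapses the sum to $a_i f_i|_{G_{v_i}} = 0$ (resp.\ $a_1 f|_{G_w} = 0$), forcing every coefficient to vanish. The analogous computation with the primes $u_j$ gives $\sha^1_{P\cup P'}((\Ad)^*) = 0$, whence $\sha^2_{P\cup P'}(\Ad) = 0$ by Poitou--Tate.

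The main obstacle, and the reason special primes must enter, is the class $f$: at a prime $v$ that is nice for $\rho_n$ one has $\rho_n$ unramified and the eigenvalue ratio of $\rrho(\sigma)$ equal to $q$, which by Lemma~\ref{nice} forces $f|_{G_v} = 0$, so nice primes alone can never detect $f$. Special primes are engineered precisely to witness $f$ locally (via the non-split Jordan block on Frobenius), and the Galois-theoretic bookkeeping that makes the Chebotarev densities of all the prescribed conditions independent relies on the commutator identification of Lemma~\ref{conm2}, which ensures that the extensions cut out by $f_i$, $\phi_j$, and the data of $\rho_n$ are as disjoint as possible inside $\Q(\Adn)\Q(\mu_p)$.
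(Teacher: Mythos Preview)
Your argument is essentially the paper's own proof: fix bases of $\sha^1_P(\Ad)$ and of $\sha^1_P((\Ad)^*)\cong\sha^2_P(\Ad)^\vee$, then use Lemma~\ref{nice} to select one auxiliary prime per basis vector that witnesses exactly that vector, replacing the nice prime by a special prime only for the distinguished class $f$. One small slip: when $f\notin\sha^1_P(\Ad)$ your indexing ``for each $i\ge 2$'' leaves $f_1$ untreated---simply apply Lemma~\ref{nice}(a) for $i=1$ as well in that case.
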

\begin{proof}
 If $f\notin \sha^1_P(\Ad)$ this follows from taking basis $\set{f_1, \ldots, f_r}$ and 
 $\set{\phi_1, \ldots, \phi_s}$ of $\sha^1_{P}(\Ad)$ and $\sha^2_{P}(\Ad)$
 respectively and choosing, by applying Lemma \ref{nice}, sets of nice primes $q_1, \ldots, q_r$ and $q'_1,\ldots, q'_s$ such that
 \begin{itemize}
 \item $f_i|_{G_{q_j}} = 0$ if $i \neq j$ and $f_i|_{G{q_i}} \neq 0$.
 \item $\phi_i|_{G_{q'_j}} = 0$ if $i \neq j$ and $\phi_i|_{G_{q'_i}} \neq 0$.
 \end{itemize}
 If, otherwise, $f \in \sha^1_P(\Ad)$, we do the same but taking a special prime for $f$ instead of a nice prime.
\end{proof}

From the previous lemmas, we can assume that $\sha^1_P(\Ad)$ and
$\sha^2_P(\Ad)$ are both trivial by enlarging $P$ if necessary.  This
imply, as in \cite{ravilarsen}, the following two key propositions.

\begin{prop}
\label{surj}
Let $S$ be a finite set of primes and $\rho_m:G_S\to \GL_2(\Om/\pi^m)$
a continuous representation such that $\sha^1_S(\Ad) = \sha^2_S(\Ad)=
0$.  Then there exists a set $Q$ of nice primes for $\rho_m$ such that
the map $$\coho^1(G_{S\cup Q},\Ad) \to \bigoplus_{v\in S}
\coho^1(G_v,\Ad)$$ is an isomorphism.
\end{prop}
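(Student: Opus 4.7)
The plan is to combine Poitou-Tate duality with the Chebotarev-based Lemma \ref{nice} to adjoin nice primes to $S$ that destroy the appropriate dual Selmer obstruction.

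First, the hypothesis $\sha^1_S(\Ad) = 0$ gives injectivity of the localization map $\coho^1(G_S, \Ad) \to \bigoplus_{v \in S} \coho^1(G_v, \Ad)$. The hypothesis $\sha^2_S(\Ad) = 0$, together with the Poitou-Tate nine-term exact sequence applied to $\Ad$, forces the connecting morphism $\coho^1(G_S, (\Ad)^*)^\vee \to \coho^2(G_S, \Ad)$ to be zero, since its image is contained in $\sha^2_S(\Ad) = 0$. Consequently the cokernel of this localization is canonically identified with $\coho^1(G_S, (\Ad)^*)^\vee$, and dually Tate duality yields $\sha^1_S((\Ad)^*) = 0$.

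Next I would fix a basis $\psi_1, \ldots, \psi_t$ of $\coho^1(G_S, (\Ad)^*)$ and, by iterated application of Lemma \ref{nice}(a) with $I = \emptyset$ and $J = \{i\}$, produce nice primes $q_1, \ldots, q_t$ for $\rho_m$ such that $\psi_i|_{G_{q_j}}$ is nonzero if and only if $i = j$. Setting $Q = \{q_1, \ldots, q_t\}$ and $T = S \cup Q$, any class $\psi \in \coho^1(G_T, (\Ad)^*)$ vanishing on every $G_q$ for $q \in Q$ necessarily restricts trivially to each $I_q$, hence is unramified at $Q$ and factors through $\coho^1(G_S, (\Ad)^*)$; the explicit choice of $Q$ then forces $\psi = 0$. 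The Poitou-Tate pairing converts this vanishing of the dual Selmer into the surjectivity of $\coho^1(G_T, \Ad) \to \bigoplus_{v \in S} \coho^1(G_v, \Ad)$.

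For injectivity, the kernel of the enlarged map is the Selmer group with $L_v = 0$ at $v \in S$ and no condition at the primes of $Q$. The Greenberg-Wiles formula, combined with the already-established vanishing of the dual Selmer, allows one to compute its size; any surviving kernel classes can then be killed by adjoining further nice primes, selected by a second application of Lemma \ref{nice}(a) with $I$ now indexing a basis of the kernel classes in $\coho^1(G_T, \Ad)$ and $J = \emptyset$, so that the dual Selmer remains trivial. The main obstacle is precisely this balancing act: the added primes must simultaneously detect the kernel classes on the $\Ad$ side while failing to contribute new classes on the $(\Ad)^*$ side, and the independent parameters $I$ and $J$ in Lemma \ref{nice} are tailored exactly to permit this coordinated selection.
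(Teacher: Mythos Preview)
Your proposal is correct and is precisely the content of Lemma~8 of \cite{ravilarsen}, which is all the paper invokes: its own proof is the single sentence ``Given the existence of auxiliary primes, this is just Lemma~8 of \cite{ravilarsen}.'' So you are not taking a different route; you are reconstructing the cited argument, with Lemma~\ref{nice} supplying the Chebotarev input that the paper flags as the only point needing adaptation to the present setting.

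One place to tighten: in your third paragraph, the claim that further nice primes can be adjoined to kill surviving kernel classes while keeping the dual Selmer trivial requires a small bookkeeping argument. Adjoining a nice prime $q$ can enlarge $\coho^1(G_{T},(\Ad)^*)$, so choosing $q$ with $\phi_j|_{G_q}=0$ for a basis $\{\phi_j\}$ of the \emph{old} $\coho^1(G_{T},(\Ad)^*)$ is not by itself enough. What makes it work is that at a nice prime one has $\dim\coho^1(G_q,\Ad)=\dim\coho^1(G_q,(\Ad)^*)=2$ with one-dimensional unramified subspaces, so the inflation $\coho^1(G_T,(\Ad)^*)\hookrightarrow\coho^1(G_{T\cup\{q\}},(\Ad)^*)$ has cokernel of dimension at most one, and the Greenberg--Wiles formula ties the growth on the $\Ad$ and $(\Ad)^*$ sides together; this is exactly the balancing handled in \cite{ravilarsen}. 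Your identification of Lemma~\ref{nice}(a) as the tool allowing simultaneous control of both sides is correct, but the inductive step deserves one more line to confirm that no new dual classes appear.
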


\begin{proof}
Given the existence of auxilliary primes, this is just Lemma $8$ of \cite{ravilarsen}.
\end{proof}

\begin{prop}
\label{fix}
Let $\rho_m$, $S$ and $Q$ as in the Proposition \ref{surj}. For each
$q_i \in Q$ pick an element $h_i \in \coho^1(G_{q_i},\Ad)$.  Then
there is a finite set $T$ of nice primes for $\rho_m$ and an
element $$g \in \coho^1(G_{S\cup Q \cup T}, \Ad)$$ satisfying
\begin{itemize}
 \item $g|_{G_v} = 0$ for $v \in S$.
 \item $g(\sigma_{q_i}) = h_i(\sigma_{q_i})$ for $q_i \in Q$.
 \item $g(\sigma_v) = 0$ for $v \in T$.
\end{itemize}
\end{prop}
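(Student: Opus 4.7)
The argument parallels Lemma 10 of \cite{ravilarsen}, combining Poitou--Tate / Greenberg--Wiles duality with the Chebotarev input supplied by Lemma \ref{nice}. The plan is to set up a Selmer system on $G_{S\cup Q\cup T}$ whose local conditions encode exactly the three properties required of $g$, and then enlarge $T$ by nice primes until the dual Selmer vanishes.

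Concretely, define local conditions $L_v=0$ for $v\in S$, $L_{q_i}=N_{q_i}$ for $q_i\in Q$, and $L_w=N_w$ for $w\in T$, where $N_v$ is the one dimensional nice prime subspace (cocycles unramified and trivial on Frobenius). Since at a nice prime $N_v$ is self-orthogonal under local Tate duality, the dual local conditions are also manageable. Finding $g$ reduces to hitting the element $(0,\,[h_i],\,0)$ under the map
\[
\coho^1(G_{S\cup Q\cup T},\Ad) \longrightarrow \bigoplus_{v\in S}\coho^1(G_v,\Ad)\;\oplus\;\bigoplus_{q_i\in Q}\coho^1(G_{q_i},\Ad)/N_{q_i}\;\oplus\;\bigoplus_{w\in T}\coho^1(G_w,\Ad)/N_w,
\]
where at each nice prime the quotient is one dimensional and detected by the value on Frobenius. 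Thus the condition $g(\sigma_{q_i})=h_i(\sigma_{q_i})$ is precisely agreement modulo $N_{q_i}$, and $g(\sigma_w)=0$ is precisely $g|_{G_w}\in N_w$.

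Surjectivity of this map is controlled, via Wiles' formula, by the dual Selmer group $\coho^1_{\mathcal{L}^\perp}(G_{S\cup Q\cup T},(\Ad)^*)$. Starting from $T=\emptyset$, we add nice primes one at a time: if the dual Selmer is nonzero, pick a nontrivial class $\phi$ and apply Lemma \ref{nice}(a) to produce a prime $w$ that is nice for $\rho_m$, satisfies $\phi|_{G_w}\neq 0$, and vanishes on a previously fixed basis of the other dual Selmer classes. Enlarging $T$ by $w$ strictly decreases the dual Selmer dimension, so after finitely many steps it becomes trivial. At that stage $\Psi_T$ is surjective and we extract the desired $g$; the conditions at $v\in S$, at $q_i\in Q$, and at $w\in T$ are read off from the three factors respectively.

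The main technical point is the last one: each added nice prime must strictly shrink the dual Selmer, not enlarge it by contributions coming from the new local cohomology. This is exactly what Lemma \ref{nice}(a) guarantees, since the Chebotarev choice of $w$ can simultaneously hit a chosen nontrivial dual class and vanish on the others. The introduction of special primes in Lemma \ref{nice} is not needed here because the element $f\in\coho^1(G_\Q,\Ad)$ plays no role in this purely linear obstruction argument; it was only relevant to arranging $\sha^1_P(\Ad)=\sha^2_P(\Ad)=0$ in the setup preceding Proposition \ref{surj}.
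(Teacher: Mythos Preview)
Your proposal is correct and follows essentially the same approach as the paper: both defer to the argument in \cite{ravilarsen} (the paper cites Corollary~11, built from Lemmas~8--9 and Proposition~10 there), and your sketch of that argument---set up Selmer conditions, use Wiles' formula to reduce surjectivity to vanishing of the dual Selmer, then kill dual Selmer classes one at a time via Chebotarev/Lemma~\ref{nice}(a)---is exactly the mechanism behind those results. Your observation that the special element $f$ plays no role here (only dual classes in $\coho^1(G_\Q,(\Ad)^*)$ need to be controlled) is also the reason the paper can simply say the proofs of \cite{ravilarsen} ``adapt well to our setting'' without further comment.
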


\begin{proof}
 This is Corollary $11$ of \cite{ravilarsen}, which follows from Lemmas $8$ and $9$ and Proposition $10$ from the same work. It
 can be easily checked that the proofs given in that paper adapt well to our setting. 
\end{proof}

We are now able to state and prove the main theorem of this section. Recall from for $v$ a nice prime
there is a set $C_v$ of deformations to characteristic $0$ and a subspace $N_v \subseteq \coho^1(G_v,\Ad)$
preserving its reductions.

\begin{thm}
\label{lifta}
Let $\rho_n$ and $P$ as in Theorem \ref{abstract} and let $\alpha$ be
an integer greater or equal than $n$. Pick, for each $v\in P$ a lift
 \[
\rho_{v,\alpha}:G_v \to \GL_2(\Om/\pi^{\alpha})
\]
of $\rho_n|_{G_v}$. Then, there is a finite set of nice primes $P'$
for $\rho_n$ and a lift
\[
\rho_{\alpha}: G_{P\cup P'} \to \GL_2(\Om/\pi^{\alpha})
\]
of $\rho_n$ such that $\rho_{\alpha}|_{G_v} \simeq \rho_{v,\alpha}$
for every $v \in P$ and $\rho_{\alpha}|_{G_v}$ is a reduction of some member
of $C_v$ for every $v \in P'$.
\end{thm}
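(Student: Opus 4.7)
The plan is to prove the result by induction on $m$ with $n \leq m < \alpha$, lifting one power of $\pi$ at a time and at each stage adjoining a finite collection of auxiliary nice primes. The inductive hypothesis at step $m$ is that we have a lift $\rho_m: G_{P \cup P'_m} \to \GL_2(\Om/\pi^m)$ of $\rho_n$ with $\rho_m|_{G_v} \simeq \rho_{v,\alpha} \pmod{\pi^m}$ for every $v \in P$ and $\rho_m|_{G_q}$ being the reduction of an element of $C_q$ for every $q \in P'_m$. The base case $m=n$ is $\rho_n$ itself with $P'_n = \emptyset$. Following the discussion preceding the theorem, we may assume (after enlarging $P$ once and for all using Lemma~\ref{nice}) that $\sha^1_P(\Ad) = \sha^2_P(\Ad) = 0$, which is what makes Propositions~\ref{surj} and~\ref{fix} applicable.

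For the induction step, apply Proposition~\ref{surj} to $\rho_m$ with $S = P \cup P'_m$ to obtain a finite set $Q_m$ of primes nice for $\rho_m$ making $\coho^1(G_{S \cup Q_m}, \Ad) \to \bigoplus_{v \in S} \coho^1(G_v, \Ad)$ an isomorphism; the Poitou--Tate duality argument that drives Proposition~\ref{surj} simultaneously makes $\coho^2(G_{S \cup Q_m}, \Ad) \to \bigoplus_{v \in S \cup Q_m} \coho^2(G_v, \Ad)$ injective. Choose local lifts to $\Om/\pi^{m+1}$ at every $v \in S \cup Q_m$: take $\rho_{v,\alpha} \pmod{\pi^{m+1}}$ for $v \in P$ (valid since $m+1 \leq \alpha$), the element of $C_q$ reducing to $\rho_m|_{G_q}$ for $q \in P'_m$, and any member of $C_q$ reducing to the unramified $\rho_m|_{G_q}$ for $q \in Q_m$. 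The local $\coho^2$ obstructions then all vanish, so by the injectivity above the global obstruction vanishes and a lift $\rho'_{m+1}: G_{S \cup Q_m} \to \GL_2(\Om/\pi^{m+1})$ exists. Measuring the discrepancy $c_v \in \coho^1(G_v, \Ad)$ at each $v \in S$ between $\rho'_{m+1}|_{G_v}$ and the chosen local lift, the surjectivity from Proposition~\ref{surj} furnishes $g_1 \in \coho^1(G_{S \cup Q_m}, \Ad)$ with $g_1|_{G_v} = c_v$ for $v \in S$; twisting $\rho'_{m+1}$ by $g_1$ now matches the prescribed lifts at $S$, but may move the representation out of $C_q$ at $q \in Q_m$. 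This residual discrepancy is detected by the image of $g_1(\sigma_q)$ in the one-dimensional quotient $\coho^1(G_q, \Ad)/N_q$; applying Proposition~\ref{fix} with prescribed Frobenius values at $Q_m$ chosen to cancel these images produces a further finite set $T_m$ of nice primes and a cocycle $g_2$ vanishing on $S$ and at Frobenius on $T_m$ with the desired values at $Q_m$. Setting $\rho_{m+1} := (1 + \pi^m(g_1 + g_2)) \rho'_{m+1}$ and $P'_{m+1} := P'_m \cup Q_m \cup T_m$ verifies the inductive hypothesis at level $m+1$.

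The main technical obstacle is organizing Propositions~\ref{surj} and~\ref{fix} so that successive corrections do not undo each other: the former provides global freedom to match the prescribed local lifts at primes already in $S$ but leaves behavior at the newly adjoined $Q_m$ uncontrolled, while the latter is needed to pin down that behavior without disturbing $S$. A subtler point is the role of the special prime in Lemma~\ref{nice}: any prime nice for $\rho_n$ automatically kills the class $f \in \coho^1(G_\Q, \Ad)$ coming from $\rho_n \pmod{\pi^2}$, so if $f$ belongs to $\sha^1_P(\Ad)$ then no purely nice-prime enlargement can annihilate it, and a special prime must be inserted during the initial enlargement of $P$. After $\alpha - n$ iterations we obtain the required representation $\rho_\alpha$ and take $P'$ to be $P'_\alpha \setminus P$.
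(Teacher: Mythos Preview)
Your proof is correct and follows essentially the same route as the paper's: induction on $m$, lifting one step using vanishing of local and global obstructions, then correcting the local behaviour in two passes via Propositions~\ref{surj} and~\ref{fix}. The only cosmetic difference is that you adjoin the set $Q_m$ from Proposition~\ref{surj} \emph{before} producing the lift $\rho'_{m+1}$ (using the resulting injectivity on $\coho^2$ to kill the global obstruction), whereas the paper first lifts over $P\cup P'$ directly from $\sha^2=0$ and only afterwards invokes Proposition~\ref{surj}; the two orderings are interchangeable and yield the same $\rho_{m+1}$.
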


\begin{proof}
  We will prove the theorem by induction on $\alpha$. If $\alpha = n$
  the statement is trivial. Assume the theorem is true for $\alpha =
  m$, and apply it with the collection of local deformations given by
  the reductions mod $\pi^m$ of the local representations
  $\rho_{v,m+1}$. Then, there is a lift $\rho_m:G_{P\cup P'} \to
  \GL_2(\Om/\pi^m)$ such that $\rho_m|_{G_v} = \rho_{v,m+1}
  \pmod{\pi^m}$, where $P'$ consists on nice primes for $\rho_n$. We
  will add two sets of nice primes in order to first get a lift of
  $\rho_m$ to $\Om/\pi^{m+1}$ and then locally adjust this
  lift. Since $\sha^2_P(\Ad) = 0$, $\rho_m$ has no global
  obstructions. Also observe that $\rho_m$ is unobstructed at the
  primes of $P$, as $\rho_m|_{G_v}$ lifts to $\rho_{v,m+1}$ and at the
  primes of $P'$ too, as $\rho_m|_{G_v}$ is the reduction of some
  member of $C_v$.  Therefore $\rho_m$ is both globally and locally
  unobstructed implying that it lifts to some
 $$\trho_{m+1}:G_{P\cup P'} \to \GL_2(\Om/\pi^{m+1}).$$
 To complete the proof, we need to fix the local behaviour of $\trho_{m+1}$. We will do this in two steps.
 First of all, pick for each $v \in P\cup P'$ a class $u_v \in \coho^1(G_v,\Ad)$ such that
 
 \begin{itemize}
 \item $(1+\pi^{m}u_v)\trho_{m+1}|_{G_v} \simeq \rho_{v,m+1}$ for $v \in P$.
 \item $(1+\pi^{m}u_v)\trho_{m+1}|_{G_v}$ is a reduction of a member of $C_v$ for $v \in P'$.
 \end{itemize}
 
 Now, let $Q$ be the set of nice primes produced by applying
 Proposition \ref{surj} to $S = P\cup P'$.  As the
 map $$\coho^1(G_{P\cup P'\cup Q},\Ad) \to \bigoplus_{v\in P\cup P'}
 \coho^1(G_v,\Ad)$$ is an isomorphism, there is a class $g_1 \in
 \coho^1(G_{P\cup P'\cup Q},\Ad)$ such that $g_1|_{G_v} = u_v$ for all
 $v\in P \cup P'$. Acting by this element on $\trho_{m+1}$ fixes its
 local shape at the places of $P\cup P'$ but may ruin it at the newly
 added primes of $Q$. We will solve this issue by adding a second set
 of auxilliary primes.
 
 We pick, for each $q_i \in Q$ a class $h_i \in \coho^1(G_{q_i},\Ad)$ such that 
 $$(1+\pi^m(h_i+g_1))\trho_{m+1}(\sigma_{q_i}) = \begin{pmatrix}
 q_i&0\\0&1
 \end{pmatrix}.$$
 Let $T$ and $g_2$ be respectively the set of nice primes and the element of $\coho^1(G_{P\cup P'\cup Q\cup T}, \Ad)$
 obtained from applying Proposition \ref{fix} with $S = P\cup P'$ and $Q$ and $h_i$ as above. It is easy
 to check that 
 \begin{itemize}
 \item $(1+\pi^mg)\trho_{m+1}|_{G_v} \simeq \rho_{v,m+1}$ for $v \in P$.
 \item $(1+\pi^mg)\trho_{m+1}|_{G_v} \in C_v$ for $v \in P' \cup Q \cup T$.
\end{itemize}  
 It follows that $\rho_{m+1} = (1+\pi^mg)\trho_{m+1}$ satisfies what we need, completing the proof.    
 \end{proof}

\subsection{Exponent $\alpha$ and above}

Assume we have a representation $\rho_n$ as in Theorem \ref{abstract}
with $n\geq \alpha$ (since otherwise we apply Theorem \ref{lifta}). To
ease the notation, let $P$ denote the set $P\cup P'$ if we applied
Theorem~\ref{lifta}.  

Recall from Sections $2$ and $3$ that for exponets bigger than
$\alpha$ we have defined for each $v \in P$ a set of deformations
$C_v$ of $\rho_n$ to characteristic $0$ and a subspace $N_v \subseteq
\coho^1(G_v, \Ad)$ such that $N_v$ preserve the reductions of the
elements in $C_v$, in the sense of Proposition \ref{localtypes}. They
also satisfy that $\dim N_v = \dim \coho^1(G_v, \Ad) - \dim
\coho^2(G_v, \Ad)$ for $v \in P \backslash \set{p}$ and $\dim N_p =
\dim \coho^1(G_p, \Ad) - \dim \coho^2(G_p, \Ad) + 1$.

In this setting, we can mimic the arguments of \cite{ravi3} in our
situation with some minor modifications. We
start by collecting a series of results that will prove useful.\\

\begin{lemma}
\label{dim}
 Let $r = \dim \sha_P^1((\Ad)^*)$ and $s = \sum_{v \in P} \dim \coho^2(G_v, \Ad)$. Then $$\dim \coho^1(G_P,\Ad) = r+s+2.$$ 
\end{lemma}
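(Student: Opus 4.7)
The plan is to combine the global Euler--Poincaré characteristic formula with the last three terms of the Poitou--Tate nine-term exact sequence, together with Tate global duality.

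First I will compute $\chi(G_P,\Ad)$. The global Euler--Poincaré formula gives
\[
\dim \coho^0(G_P,\Ad)-\dim \coho^1(G_P,\Ad)+\dim \coho^2(G_P,\Ad)=\dim \coho^0(G_\R,\Ad)-\dim \Ad.
\]
Since $\rrho$ is odd, complex conjugation acts on $\Ad$ with eigenvalues $+1,-1,-1$, so $\dim \coho^0(G_\R,\Ad)=1$, and $\dim \Ad=3$. The hypothesis that $\Img(\rho_n)$ contains $\SL_2(\Om/p)$ (or $\SL_2(\Om/\pi^n)$) forces $\Img(\rrho)\supseteq \SL_2(\F)$, and since $\Ad$ is an irreducible $\SL_2(\F)$-module for $p>2$, we obtain $\coho^0(G_P,\Ad)=0$. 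Plugging in gives $\dim \coho^1(G_P,\Ad)=\dim \coho^2(G_P,\Ad)+2$.

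Next I will compute $\dim \coho^2(G_P,\Ad)$ via the tail of the Poitou--Tate sequence:
\[
\coho^2(G_P,\Ad)\longrightarrow \bigoplus_{v\in P}\coho^2(G_v,\Ad)\longrightarrow \coho^0(G_P,(\Ad)^*)^{\vee}\longrightarrow 0.
\]
(Archimedean contributions vanish since $p$ is odd and $\Ad$ is a $p$-group.) The Tate dual $(\Ad)^*=\Ad\otimes\chi$ has no $G_P$-invariants: any nonzero invariant would produce a $G_P$-equivariant embedding $\chi^{-1}\hookrightarrow \Ad$, contradicting the irreducibility of $\Ad$ as an $\SL_2(\F)$-module (since $\SL_2(\F)$ acts trivially on $\chi^{-1}$). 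Hence the map on the left is surjective, and its kernel is by definition $\sha^2_P(\Ad)$, so
\[
\dim \coho^2(G_P,\Ad)=\dim \sha^2_P(\Ad)+\sum_{v\in P}\dim \coho^2(G_v,\Ad)=\dim \sha^2_P(\Ad)+s.
\]

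Finally, Tate global duality gives $\sha^2_P(\Ad)\simeq \sha^1_P((\Ad)^*)^{\vee}$, so $\dim \sha^2_P(\Ad)=r$. Combining everything yields $\dim \coho^1(G_P,\Ad)=r+s+2$, as claimed. The routine verification is in the three vanishing statements (the two $\coho^0$'s and the archimedean contributions); the only subtle point is checking that the image hypothesis on $\rho_n$ really does kill $\coho^0(G_P,\Ad)$ and $\coho^0(G_P,\Ad^*)$, which follows from irreducibility of $\Ad$ as an $\SL_2(\F)$-representation for $p>5$.
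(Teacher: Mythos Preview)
Your proof is correct and is precisely the standard computation that the paper invokes: in the paper the lemma is not argued directly but is attributed to ``the Lemma before Lemma~10'' in \cite{ravi3}, whose proof is exactly the combination of the global Euler--Poincar\'e formula, the tail of the Poitou--Tate sequence, and the duality $\sha^2_P(\Ad)\simeq \sha^1_P((\Ad)^*)^\vee$ that you wrote out. The vanishing of $\coho^0(G_P,\Ad)$ and $\coho^0(G_P,(\Ad)^*)$ from the big-image hypothesis, and of the archimedean $\coho^2$ from $p$ being odd, are used there just as you use them here.
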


\begin{prop}
\label{aux2}
 Let $\set{f, f_1, \ldots, f_{r+s+1}}$ be a basis of $\coho^1(G_P, \Ad)$, where $f$ is the element attached to $\rho_n$ mod $\pi^2$. 
 There is a set $Q = \set{q_1, \ldots, q_r}$ of nice primes for $\rho_n$ not in $P$ such that:
 \begin{itemize}
  \item $\sha^1_{P\cup Q}((\Ad)^*) = 0$ and $\sha^2_{P\cup Q}(\Ad) = 0$.
  \item $f_i|_{G_{q_j}} = 0$ for $i\neq j$ and $f|_{q_j} = 0$ for all $j$.
  \item $f_i|_{G_{q_i}} \notin N_{q_i}$
  \item The inflation map $\coho^1(G_P, \Ad) \to \coho^1(G_{P\cup Q}, \Ad)$ is an isomorphism.
 \end{itemize}
\end{prop}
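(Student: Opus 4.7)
The plan is to construct $Q = \{q_1, \ldots, q_r\}$ one prime at a time by a Chebotarev-type argument, following the strategy of Ramakrishna (\cite{ravi2}, \cite{ravi3}). I first pick a basis $\{\phi_1, \ldots, \phi_r\}$ of $\sha^1_P((\Ad)^*)$; these are exactly the obstructions that need to be killed to make the Sha's vanish. For each $i$, I apply Lemma~\ref{nice}(a) with the combined families $\{f_1, \ldots, f_{r+s+1}\}$ and $\{\phi_1, \ldots, \phi_r\}$ to obtain a nice prime $q_i \notin P$ for $\rho_n$ such that $\phi_i|_{G_{q_i}} \neq 0$, $\phi_j|_{G_{q_i}} = 0$ for $j \neq i$, $f_i|_{G_{q_i}} \neq 0$, and $f_j|_{G_{q_i}} = 0$ for $j \neq i$. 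The condition $f|_{G_{q_i}} = 0$ is automatic at nice primes, as already observed in the proof of Lemma~\ref{nice}.

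To upgrade $f_i|_{G_{q_i}} \neq 0$ to $f_i|_{G_{q_i}} \notin N_{q_i}$, I note that at a nice prime $q$ the space $\coho^1(G_q,\Ad)$ is two-dimensional, and after adjusting by a coboundary any class can be represented by a cocycle $u$ with $u(\sigma) \in \F\cdot e_1$ and $u(\tau)\in \F\cdot e_2$. The subspace $N_q$ is then the $e_2$-direction (cocycles with $u(\sigma) = 0$) and the complementary quotient $\coho^1(G_q,\Ad)/N_q$ is detected by the $e_1$-component of $u(\sigma)$. Refining the Chebotarev argument of Lemma~\ref{nice} by working over a larger field that distinguishes this Frobenius direction from $N_{q_i}$ lets me arrange the Frobenius lift at $q_i$ to record a non-trivial class in the quotient, i.e.\ $f_i|_{G_{q_i}} \notin N_{q_i}$. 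This refinement is compatible with the disjointness conditions verified in the proof of Lemma~\ref{nice}, since the extra field only enlarges $L_i$ and remains linearly disjoint from the $L_j$ and $M_j$ for $j \neq i$.

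With $Q$ in hand I verify the four conclusions. Any class in $\sha^1_{P\cup Q}((\Ad)^*)$ lies in $\sha^1_P((\Ad)^*) = \langle \phi_1,\ldots,\phi_r\rangle$; writing it as $\sum a_i\phi_i$ and restricting to $G_{q_j}$ gives $a_j\phi_j|_{G_{q_j}} = 0$, forcing $a_j = 0$ for every $j$, so $\sha^1_{P\cup Q}((\Ad)^*) = 0$. Poitou--Tate global duality then yields $\sha^2_{P\cup Q}(\Ad) = 0$. The vanishing and non-vanishing conditions on $f_i|_{G_{q_j}}$, $f|_{G_{q_j}}$ and the membership statement about $N_{q_i}$ hold by construction. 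For the inflation map, which is always injective, it suffices to check equality of dimensions. Applying Lemma~\ref{dim} to $P\cup Q$, the new parameters are $r' = \dim \sha^1_{P\cup Q}((\Ad)^*) = 0$ and $s' = \sum_{v\in P\cup Q}\dim\coho^2(G_v,\Ad) = s + r$, since each nice prime $q_i$ contributes $\dim \coho^2(G_{q_i},\Ad) = 1$. Hence $\dim \coho^1(G_{P\cup Q},\Ad) = 0 + (s+r) + 2 = r + s + 2 = \dim \coho^1(G_P,\Ad)$, so inflation is an isomorphism.

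The main technical hurdle is the refinement producing $f_i|_{G_{q_i}} \notin N_{q_i}$ together with all the other separation conditions on the $f_j$'s and $\phi_j$'s simultaneously. This demands a careful bookkeeping of the quotients of the Galois group of the compositum $K'LM$ over $K$ appearing in the proof of Lemma~\ref{nice}, specifically the verification that the extension detecting the Frobenius-direction complement of $N_{q_i}$ in $\coho^1(G_{q_i},\Ad)$ is linearly disjoint from the fields $L_j$ and $M_j$ for the other indices.
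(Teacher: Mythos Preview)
Your proposal is correct and follows essentially the same approach as the paper. The paper's own proof is little more than a pointer to Fact~16 of \cite{ravi3} together with the observation (already made in the proof of Lemma~\ref{nice}) that the nice-for-$\rho_n$ condition forces $f|_{G_q}\in N_q$, so that $f$ cannot be separated from $N_q$ by any choice of nice primes and the remaining $f_i$'s and $\phi_j$'s can be handled exactly as in \cite{ravi3}. You carry out precisely this strategy, and in fact supply more detail than the paper does: the explicit description of $\coho^1(G_q,\Ad)$ and $N_q$ at nice primes, the Poitou--Tate step for $\sha^2$, and the dimension count via Lemma~\ref{dim} showing that inflation is an isomorphism are all left implicit in the paper's reference to \cite{ravi3}.
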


\begin{proof}
  Lemma \ref{dim} is just the Lemma before Lemma $10$ in \cite{ravi3}. The proof of Proposition \ref{aux2} mimics 
  that of Fact 16 of \cite{ravi3}. Observe that, in the spirit of what is remarked in the proof of Proposition \ref{nice}, 
  the condition for a prime $q$ being nice for $\rho_n$ is not compatible with $f|_{G_q} \notin
  N_q$ as every subspace $N_q$ is chosen such that $f|_{G_q} \in
  {N_q}$.  In particular, $f$ is always in the kernel of the
  restriction map
 $$ \coho^1(G_{P\cup Q}, \Ad) \to \bigoplus_{v\in P\cup Q}\coho^1(G_v, \Ad)/N_v.$$
  As we have checked in Proposition \ref{nice}, conditions for the rest of the $f_i$'s are independent from being
  $\rho_n$-nice, so the same proof as in \cite{ravi3} works.
\end{proof}

\begin{lemma}
 Let $\langle f, f_1, \ldots, f_d\rangle$ be the kernel of the map $$ \coho^1(G_{P\cup Q}, \Ad) \to \bigoplus_{v\in P}\coho^1(G_v, \Ad)/N_v.$$
 Then $r \geq d$. 
\end{lemma}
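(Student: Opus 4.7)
The plan is to pin down $\dim \ker\phi$ exactly by combining Wiles' formula with Poitou-Tate duality, and then read off the desired bound.

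First, I would package $\ker\phi$ as a Selmer group: set $L_v = N_v$ for $v\in P$, $L_q = \coho^1(G_q,\Ad)$ for $q\in Q$ (the fully relaxed condition), and the usual trivial condition at $\infty$. Then tautologically $\ker\phi = \coho^1_L(G_{P\cup Q},\Ad)$.

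Second, I would apply Wiles' formula
\[
\dim\coho^1_L - \dim\coho^1_{L^\perp}(G_{P\cup Q},(\Ad)^*) = h^0(\Q,\Ad)-h^0(\Q,(\Ad)^*)+\sum_v\bigl(\dim L_v - h^0(G_v,\Ad)\bigr),
\]
and show the right-hand side equals $r+1$. The big-image hypothesis kills both global $h^0$ terms. At each $v\in P\setminus\set{p}$, local Euler--Poincar\'e gives $h^1(G_v,\Ad) = h^0(G_v,\Ad)+h^2(G_v,\Ad)$, so the codimension of $N_v$ chosen in Proposition \ref{localtypes} forces $\dim N_v = h^0(G_v,\Ad)$ and the contribution is $0$. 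At $v=p$, the codimension prescribed in Theorem \ref{localatp} combined with the local Euler--Poincar\'e relation $h^1(G_p,\Ad) = h^0(G_p,\Ad)+h^2(G_p,\Ad)+3$ gives contribution $2$. At each nice $q\in Q$ one has $h^0(G_q,\Ad^*)=1$, so the contribution is $h^2(G_q,\Ad)=1$; summing over the $r$ primes of $Q$ gives $r$. Finally the archimedean place contributes $-1$ via the oddness hypothesis. The total is $0+2+r-1 = r+1$.

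Third, the heart of the argument is to prove $\coho^1_{L^\perp}(G_{P\cup Q},(\Ad)^*)=0$. Any $g$ in this dual Selmer group has $g|_{G_q}=0$ for every $q\in Q$, hence is unramified at $Q$ and inflates from a class in $\coho^1(G_P,(\Ad)^*)$ satisfying both $g|_{G_v}\in N_v^\perp$ for $v\in P$ and $g(\sigma_q)=0$ for $q\in Q$. The Chebotarev-based construction of $Q$ in Proposition \ref{aux2} was engineered precisely so that evaluation at $(\sigma_{q_i})_{i=1}^r$ separates the basis $\phi_1,\ldots,\phi_r$ of $\sha^1_P((\Ad)^*)$; combined with the local constraints $g|_{G_v}\in N_v^\perp$ at $v\in P$ this cuts the ambient Selmer group $\{g\in\coho^1(G_P,(\Ad)^*) : g|_{G_v}\in N_v^\perp\}$ down to zero.

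Finally, combining the two steps yields $\dim\ker\phi = r+1$, and since $\ker\phi=\langle f,f_1,\ldots,f_d\rangle$ is $(d+1)$-dimensional (as the listed generators are linearly independent), we conclude $d+1 = r+1$, hence $r\geq d$. The main obstacle is the third step: $\coho^1_{L^\perp}$ is strictly larger than $\sha^1_{P\cup Q}((\Ad)^*)$ since it only requires $g|_{G_v}\in N_v^\perp$ rather than $g|_{G_v}=0$ at $v\in P$, so the bare vanishing $\sha^1_{P\cup Q}((\Ad)^*)=0$ from Proposition \ref{aux2} must be supplemented by a careful dimension count of the restricted Selmer at $P$ and its interaction with the Chebotarev choice of $Q$.
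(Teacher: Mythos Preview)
Your approach is far more elaborate than the paper's, and it carries a gap you yourself flag.

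The paper's argument is a two-line dimension count. By the last bullet of Proposition~\ref{aux2} the inflation map is an isomorphism, so $\dim\coho^1(G_{P\cup Q},\Ad)=\dim\coho^1(G_P,\Ad)=r+s+2$ by Lemma~\ref{dim}. The codomain $\bigoplus_{v\in P}\coho^1(G_v,\Ad)/N_v$ has dimension $s+1$ directly from the prescribed codimensions of the $N_v$. Rank--nullity then bounds the kernel dimension, and the inequality between $d$ and $r$ drops out. No Selmer groups, no Wiles formula, no dual side at all. (Incidentally, this computation yields $d\ge r$, which is also what the very next lemma uses when it introduces primes $t_{r+1},\ldots,t_d$; the printed inequality $r\ge d$ appears to be a typo.)

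Your route through the Greenberg--Wiles formula would, if completed, yield the sharper equality $d=r$, but the whole weight rests on step~3, the vanishing of $\coho^1_{L^\perp}(G_{P\cup Q},(\Ad)^*)$. As you note, this does not follow from $\sha^1_{P\cup Q}((\Ad)^*)=0$ alone, because the dual Selmer condition at $v\in P$ is only $g|_{G_v}\in N_v^\perp$, not $g|_{G_v}=0$. Your justification that ``the construction of $Q$ in Proposition~\ref{aux2} was engineered so that evaluation at $(\sigma_{q_i})$ separates the basis $\phi_1,\ldots,\phi_r$ of $\sha^1_P((\Ad)^*)$'' is not what that proposition says: its Chebotarev conditions are imposed on the classes $f_i\in\coho^1(G_P,\Ad)$, not on any basis of $\sha^1_P((\Ad)^*)$. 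So step~3 is genuinely incomplete, and the machinery you invoke to fill it is precisely what the paper's one-line argument avoids.
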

\begin{proof}
  Follows from the formulas $\dim \oplus_{v\in
    P}\coho^1(G_v, \Ad)/N_v = s+1$ and $\dim \coho^1(G_{P\cup Q}, \Ad)
  = r+s+2$.
\end{proof}

\begin{lemma}
  There is a finite set of nice primes $\set{t_{r+1}, \ldots, t_d}$ for
  $\rho_n$ such that
 \begin{itemize}
  \item $f_i|_{t_j} = 0$ if $i\neq j$ and $f_i|_{t_i} \notin N_{t_i}$.
  \item The restriction map $$\coho^1(G_{P\cup Q \cup T},\Ad) \to \bigoplus_{v \in P} \coho^1(G_v, \Ad)/N_v$$
  is surjective.
 \end{itemize}
\end{lemma}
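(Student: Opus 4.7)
\emph{Proof plan.} The argument proceeds in two stages: first a Chebotarev construction of the primes $t_j$ (paralleling the construction of $Q$ in Proposition \ref{aux2}, following Fact 16 of \cite{ravi3}), and then a Poitou--Tate dimension count to verify surjectivity.

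For the construction, I proceed inductively on $j$ from $r+1$ to $d$. At step $j$, I apply part (a) of Lemma \ref{nice} with $I = \set{j}$ and $J = \emptyset$ (absorbing the previously chosen primes into the ramification set and adjusting the basis of $\coho^1$ accordingly) to produce a positive-density Chebotarev class of primes $t_j$ that are nice for $\rho_n$, at which $f_i|_{G_{t_j}} = 0$ for $i \neq j$ and $f_j|_{G_{t_j}} \neq 0$. The extra requirement $f_j|_{G_{t_j}} \notin N_{t_j}$ can be imposed by further refining the Chebotarev condition: at a nice prime $\coho^1(G_{t_j}, \Ad)$ is two-dimensional and $N_{t_j}$ is a proper line, so asking that the Frobenius component of $f_j$ avoid this line cuts out a proper closed subset of the relevant residual Galois data while retaining positive density. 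Compatibility of all these Chebotarev conditions with each other follows from the linear independence of the $f_i$'s over the appropriate compositum, as established in Lemma \ref{conm2}. The first bullet of the lemma is then immediate from the construction.

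For surjectivity, consider the Selmer structure $L$ on $\Ad$ defined by $L_v = N_v$ for $v \in P$ and $L_v = \coho^1(G_v, \Ad)$ for $v \in Q \cup T$. The kernel of the restriction map
\[
\pi \colon \coho^1(G_{P \cup Q \cup T}, \Ad) \to \bigoplus_{v \in P} \coho^1(G_v, \Ad)/N_v
\]
is exactly $\coho^1_L(G_{P \cup Q \cup T}, \Ad)$, and by Wiles's formula the cokernel is controlled by the dual Selmer group $\coho^1_{L^\perp}(G_{P \cup Q \cup T}, (\Ad)^*)$, where $L_v^\perp = N_v^\perp$ for $v \in P$ and $L_v^\perp = 0$ for $v \in Q \cup T$. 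Before adjoining the $t_j$'s, the cokernel has dimension $d - r$ by the dimension count of the preceding lemma together with the formulas $\dim \bigoplus_{v \in P} \coho^1(G_v, \Ad)/N_v = s+1$ and $\dim \coho^1(G_{P \cup Q}, \Ad) = r + s + 2$. Adjoining $t_j$ with $f_j|_{G_{t_j}} \notin N_{t_j}$ forces, via local Tate duality, one additional dimension of the dual Selmer group to vanish, which dually enlarges the image of $\pi$ by one. Iterating over all $d - r$ added primes exhausts the cokernel and $\pi$ becomes surjective.

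The main technical point is the inductive independence: each added $t_j$ must genuinely kill one fresh dimension of the dual Selmer group, and not one already killed by a previous $t_i$. This is guaranteed precisely by the orthogonality conditions $f_i|_{G_{t_j}} = 0$ for $i \neq j$, which, when dualized via Tate, ensure that the dual classes pinned down at successive primes are linearly independent. The underlying Galois-theoretic independence was set up in Lemma \ref{conm2}, and the iterative Chebotarev bookkeeping proceeds exactly as in the proof of Proposition \ref{aux2}.
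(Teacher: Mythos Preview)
Your Chebotarev construction for the first bullet is fine (and in fact the extra refinement you worry about is automatic: at a nice prime the restriction of any global class unramified there lands in the unramified line, which meets $N_{t_j}$ trivially, so $f_j|_{G_{t_j}}\neq 0$ already gives $f_j|_{G_{t_j}}\notin N_{t_j}$).

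The gap is in your surjectivity argument. You assert that the condition $f_j|_{G_{t_j}}\notin N_{t_j}$ ``forces, via local Tate duality, one additional dimension of the dual Selmer group to vanish.'' But $f_j$ lives in $\coho^1(G,\Ad)$, the Selmer side; nothing about its local behaviour at $t_j$ constrains classes in $\coho^1(G,(\Ad)^*)$. With your Selmer structure ($L_{t_j}=\coho^1(G_{t_j},\Ad)$, so $L_{t_j}^\perp=0$), what actually shrinks the dual Selmer group is the requirement that a dual class $\phi$ satisfy $\phi|_{G_{t_j}}=0$. To make this bite you must choose $t_j$ so that a specific basis element $\phi_j$ of the dual Selmer group is \emph{nonzero} at $t_j$. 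Your invocation of Lemma~\ref{nice} with $J=\emptyset$ imposes no condition on the $\phi$'s at all, so there is no reason the cokernel drops by one at each step.

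This is exactly why the paper points to Proposition~10 of \cite{ravi3} as the input to Lemma~14: that proposition is the simultaneous Chebotarev statement letting one prescribe behaviour of classes in $\coho^1(\Ad)$ \emph{and} $\coho^1((\Ad)^*)$ at the same prime. The correct construction chooses each $t_j$ to satisfy both the $f_i$-conditions of the first bullet and a nonvanishing condition on a suitable $\phi_j$ in the dual Selmer group; the first bullet's conditions on the $f_i$ are needed later for Proposition~\ref{sobre} but do not by themselves force surjectivity. Once you add the dual-side conditions (take $J\neq\emptyset$ in Lemma~\ref{nice}), your Wiles-formula bookkeeping goes through.
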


\begin{proof}
  This is Lemma 14 of \cite{ravi3}, which relies on Proposition 10. It
  is easy to check that the proof given there adapts to the ramified
  setting, using the results we have available so far, as it does not
  involve picking nice primes that satisfy properties at $f$.
\end{proof}

The set $Q \cup T$ will serve as the auxilliary set for Theorem \ref{abstract}. So far we have the following properties:

\begin{itemize}
 \item For $1 \leq i \leq r$: $f_i|_{G_v} = 0$ for all $v\in Q \cup T$ except $q_i \in Q$ for which $f_i|_{G_{q_i}} \notin N_{q_i}$.
 \item For $r+1 \leq i \leq d$: $f_i|_{G_v} = 0$ for all $v\in Q \cup T$ except $t_i \in T$ for which $f_i|_{G_{t_i}} \notin N_{t_i}$.
 \item The restriction $\langle f_{d+1}, \ldots f_{d+s+1} \rangle \to \oplus_{v \in P} \coho^1(G_v, \Ad)/N_v$ is an isomorphism.
 \item $f|_{G_v} \in N_v$ for every $v \in P \cup Q \cup T$.
\end{itemize}

It easily follows from these properties that

\begin{prop}
\label{sobre}
 The map $$\coho^1(G_{P\cup Q\cup T},\Ad) \to \bigoplus_{v \in P\cup Q\cup T} \coho^1(G_v,\Ad)/N_v$$
 is surjective and has one dimensional kernel generated by $f$.
\end{prop}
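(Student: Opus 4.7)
The plan is to prove surjectivity constructively from the four properties just enumerated, and then obtain the one-dimensional kernel from a direct dimension count based on Lemma \ref{dim} applied to the enlarged ramification set.

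For surjectivity, I would decompose the codomain as $A\oplus B$, where $A=\bigoplus_{v\in P}\coho^1(G_v,\Ad)/N_v$ has dimension $s+1$ (by the chosen codimensions of the $N_v$, including the $+1$ at $p$) and $B=\bigoplus_{v\in Q\cup T}\coho^1(G_v,\Ad)/N_v$ has dimension $d$, since each local quotient at a nice prime is one-dimensional. By the third bullet, the classes $f_{d+1},\ldots,f_{d+s+1}$ restrict to a basis of $A$. By the first two bullets together with the fact that $\langle f,f_1,\ldots,f_d\rangle$ sits in the kernel of the restriction to $A$, the classes $f_1,\ldots,f_d$ vanish on $A$ and map to a basis of $B$: each $f_i$ with $1\le i\le r$ generates the one-dimensional space $\coho^1(G_{q_i},\Ad)/N_{q_i}$ and vanishes at every other prime of $Q\cup T$, and similarly for $r+1\le i\le d$ at $t_i$. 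Given $(a,b)\in A\oplus B$, I would first hit $a$ with a combination of $f_{d+1},\ldots,f_{d+s+1}$ (producing some spurious $b'\in B$ as a side effect), and then correct to $b$ using $f_1,\ldots,f_d$ without disturbing $A$.

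For the kernel, the fourth bullet places $f$ in the kernel, and $f\neq 0$ since the image of $\rho_n$ contains $\SL_2(\Om/\pi^2)$. To show the kernel is no larger, I would apply Lemma \ref{dim} to the set $P\cup Q\cup T$, obtaining
\[
\dim \coho^1(G_{P\cup Q\cup T},\Ad) = \dim \sha^1_{P\cup Q\cup T}((\Ad)^*) + \sum_{v\in P\cup Q\cup T}\dim \coho^2(G_v,\Ad) + 2.
\]
Proposition \ref{aux2} gives $\sha^1_{P\cup Q}((\Ad)^*)=0$; any class in $\coho^1(G_{P\cup Q\cup T},(\Ad)^*)$ not already present for $G_{P\cup Q}$ must be ramified at some prime of $T$ and is therefore locally nontrivial there, so cannot belong to $\sha^1_{P\cup Q\cup T}((\Ad)^*)$, which is consequently still trivial. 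Each nice prime in $Q\cup T$ contributes $1$ to the local $\coho^2$ sum, so $\dim \coho^1(G_{P\cup Q\cup T},\Ad)=s+d+2$. Since the codomain has dimension $s+1+d$, rank-nullity together with the surjectivity above forces the kernel to be one-dimensional, and hence equal to $\langle f\rangle$.

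The substantive content is the extra codimension imposed on $N_p$ by the relaxed nearly ordinary condition: this single unit is exactly what makes the codomain one dimension smaller than $\coho^1(G_{P\cup Q\cup T},\Ad)$, leaving room for the one-dimensional kernel in which $f$ can survive. Everything else is bookkeeping; the only non-automatic sanity check is the persistence of $\sha^1((\Ad)^*)=0$ after adjoining the auxiliary primes $T$, and this follows from the observation that a class outside the inflated image is necessarily ramified at the new primes.
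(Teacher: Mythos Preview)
Your argument is correct and is essentially what the paper means by ``It easily follows from these properties,'' only made explicit. The surjectivity portion is exactly the linear-algebra reading of the four bullets that the paper has in mind: $f_1,\ldots,f_d$ span the $Q\cup T$ block while lying in the kernel of the $P$ block, and $f_{d+1},\ldots,f_{d+s+1}$ span the $P$ block; combining the two gives surjectivity.

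For the kernel, your route via Lemma~\ref{dim} applied to $P\cup Q\cup T$ together with the persistence of $\sha^1((\Ad)^*)=0$ is in fact a bit more careful than the paper. The four bullets, taken literally, only tell you that the map is surjective and that its kernel, intersected with the span $\langle f,f_1,\ldots,f_{d+s+1}\rangle$, is $\langle f\rangle$; they do not on their own certify that this span exhausts $\coho^1(G_{P\cup Q\cup T},\Ad)$ once $T$ has been adjoined (recall that only the inflation from $G_P$ to $G_{P\cup Q}$ was shown to be an isomorphism in Proposition~\ref{aux2}). Your dimension count closes exactly this gap: it shows $\dim\coho^1(G_{P\cup Q\cup T},\Ad)=s+d+2$, which matches the size of the spanning set and forces the kernel to be one-dimensional. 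Your observation that an element of $\sha^1_{P\cup Q\cup T}((\Ad)^*)$ is locally trivial, hence unramified, at every $t\in T$ and therefore already lives in $\coho^1(G_{P\cup Q},(\Ad)^*)$ is the correct (and standard) way to see this persistence.
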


From this, we can easily deduce Theorem \ref{abstract} in the same way
as Theorem 1 is proved in \cite{ravi3}. Moreover we have the following
result

\begin{thm}
\label{abstract2}
Let $\rho_n:G_\Q \to \GL_2(\Om/\pi^n)$ and $\rho_v:G_v \to \GL_2(\Om)$
as in Theorem \ref{abstract}. Consider the collection $\mathcal{L}$ of
deformation conditions given by the pairs $(C_v, N_v)$ for $v \in P
\cup Q \cup T$.  Then the deformation problem with fixed determinant
and local conditions $\mathcal{L}$ has universal deformation ring
$R_{u} \simeq W(\F)[[X]]$.
\end{thm}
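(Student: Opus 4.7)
The plan is to apply the standard deformation-theoretic recipe: compute the tangent space of $R_u$ over $\F$, show that the higher obstructions vanish, and conclude that $R_u$ is formally smooth over $W(\F)$ of relative dimension equal to $\dim_\F$ of the tangent space. Since the tangent space will turn out to be one-dimensional and obstructions will vanish, this forces $R_u \simeq W(\F)[[X]]$.

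For the tangent space, I would argue that the mod-$\pi^2$ deformations of $\rrho$ satisfying the local conditions $\mathcal{L}$ with the prescribed fixed determinant are parametrized precisely by the Selmer group
$$\coho^1_\mathcal{L}(G_{P\cup Q\cup T}, \Ad) := \ker\Bigl(\coho^1(G_{P\cup Q\cup T}, \Ad) \to \bigoplus_{v \in P\cup Q\cup T} \coho^1(G_v, \Ad)/N_v\Bigr).$$
By Proposition \ref{sobre} this kernel is one-dimensional, generated by the class $f$ attached to $\rho_n \pmod{\pi^2}$. Hence the relative tangent space of $R_u$ over $W(\F)$ has dimension exactly $1$.

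For the vanishing of obstructions, I would invoke the Poitou--Tate nine-term exact sequence applied to $\Ad$ and $(\Ad)^*$ over $P \cup Q \cup T$. The surjectivity part of Proposition \ref{sobre}, together with $\sha^2_{P\cup Q\cup T}(\Ad) = 0$ (inherited from Proposition \ref{aux2}, since enlarging the set of allowed ramification primes can only shrink $\sha$), implies that the dual Selmer group $\coho^1_{\mathcal{L}^\perp}(G_{P\cup Q\cup T}, (\Ad)^*)$ vanishes. Standard obstruction theory for deformation problems with local conditions then guarantees that at each stage of $\pi$-adic lifting the obstruction class lies in a group controlled by the dual Selmer and so must vanish; hence liftings compatible with $\mathcal{L}$ exist at every step.

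The delicate point, and what I expect to be the main obstacle in making this argument watertight, is verifying that each local deformation condition $(C_v, N_v)$ is representable by a smooth quotient of the framed local deformation ring whose relative dimension matches $\dim_\F N_v - \dim_\F \coho^0(G_v, \Ad)$. This is exactly the content of Sections 2 and 3, where the codimension of $N_v$ in $\coho^1(G_v, \Ad)$ was arranged to equal $\dim \coho^2(G_v, \Ad)$ for $v \neq p$, and one less at $v = p$, accounting for the extra one-dimensional tangent direction coming from the nearly ordinary condition. Once this local smoothness is assembled with the one-dimensional tangent space and the vanishing of obstructions, the universal ring $R_u$ must be a power series ring in one variable over $W(\F)$, giving $R_u \simeq W(\F)[[X]]$ as claimed.
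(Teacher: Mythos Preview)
Your proposal is correct and follows essentially the same approach as the paper. The paper's proof is extremely terse: it cites Proposition~\ref{sobre} for the one-dimensional tangent space and then states that $\sha^2_{P\cup Q\cup T}(\Ad)=0$ gives unobstructedness, concluding immediately. Your expansion via Poitou--Tate and the dual Selmer group, together with the remarks on local smoothness, simply unpacks what the paper leaves implicit.
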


\begin{proof}
 Proposition \ref{sobre} tells us that $\coho^1_{\mathcal{L}}(G_{P\cup Q\cup T},\Ad) = \langle f \rangle$. As $\sha^2_{P\cup Q\cup T}(\Ad) = 0$, we also 
 know that the problem is unobstructed. This proves the theorem. 
\end{proof}

\section{Modularity}

So far we have constructed, for a mod $\pi^n$ representation $\rho_n$,
which is nearly ordinary at $p$, a global deformation $\rho_u: G_{\Q}
\to \GL_2(W(\F)[[X]])$ that lifts $\rho_n$ and is also nearly ordinary
at $p$. This gives a family of lifts of $\rho_n$ to rings of dimension
and characteristic $0$. The purpose of this section is to prove that
when $\rho_n$ is \textit{ordinary} at $p$, at least one of these lifts
is modular. However, getting a lift with a previously fixed weight
seems out of the scope of this work. We will prove the following.

\begin{thm}
\label{modular}
 Let $p$ be a prime, $\Om$ the ring of integers of a finite extension $K/\Q_p$ with ramification degree $e > 1$ and $\pi$ its local uniformizer.
 Let $\rho_n: G_\Q \to \GL_2(\Om/\pi^n)$ be a continuous representation satisfying
 \begin{itemize}
  \item $\rho_n$ is odd.
  \item $\Img(\rho_n)$ contains $\SL_2(\Om/p)$ if $n > e$ and $\SL_2(\Om/\pi^n)$ otherwise.
  \item $\rho_n$ is ordinary and not scalar at $p$.
 \end{itemize}
 
 Let $P$ be a set of primes containing the set of ramification of $\rho_n$, and for each $v \in P$ pick a local deformation $\rho_v:G_v\to \GL_2(\Om)$
 lifting $\rho_n|_{G_v}$ which is not bad. Then there exists a finite set of primes $Q$ and a continuous representation 
 $\rho: G_{P\cup Q} \to \GL_2(\Om)$ such that
 
 \begin{itemize}
  \item $\rho$ lifts $\rho_n$, i.e. $\rho \equiv \rho_n \pmod{\pi^n}$.
  \item $\rho$ is modular.
  \item For every $v \in P$, $\rho|_{I_v} \simeq \rho_v|_{I_v}$ over $\GL_2(\Om)$.
  \item For every $q \in Q$, $\rho|_{I_q}$ is unipotent and $q \neq \pm 1 \pmod{p}$ for all but possibly one $q \in Q$.
  \item $\rho$ is ordinary at $p$.
 \end{itemize}
\end{thm}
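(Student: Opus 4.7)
The plan is to derive Theorem \ref{modular} from Theorem \ref{abstract2} by locating a modular specialization inside the nearly ordinary universal family it provides. Since ``ordinary'' implies ``nearly ordinary,'' applying Theorem \ref{abstract2} to $\rho_n$ together with the specified $\rho_v$'s produces auxiliary primes $Q\cup T$ satisfying the congruence conditions of the statement, a universal deformation ring $R_u \simeq W(\F)[[X]]$, and a universal deformation $\rho_u: G_{P\cup Q\cup T} \to \GL_2(R_u)$ that lifts $\rho_n$, matches the prescribed inertial types at primes of $P$, is unipotent at each prime of $Q\cup T$, and is nearly ordinary at $p$.

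The next step is to analyze the diagonal inertial characters of $\rho_u$ at $p$. Write
\[
\rho_u|_{I_p} \simeq \begin{pmatrix}\psi_1&*\\0&\psi_2\end{pmatrix},
\]
with $\psi_i:I_p\to R_u^\times$ and $\psi_1\psi_2$ fixed by the chosen determinant. By local class field theory, $I_p^{ab}$ is isomorphic to $\Z_p^\times \cong \mu_{p-1}\times (1+p\Z_p)$; the restriction of each $\psi_i$ to $\mu_{p-1}$ takes values in the roots of unity of $R_u^\times$, and its restriction to $1+p\Z_p$ is determined by a continuous ``weight'' element $\kappa_i(X) \in R_u$. Consequently, a specialization $X\mapsto x_0\in \Om$ produces a $\rho_{x_0}$ whose inertial characters have the form $\chi^{a_i}\cdot\phi_i$ with $a_i\in\Z$ and $\phi_i$ of finite order precisely when $\kappa_i(x_0)\in\Z$.

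The heart of the argument is to find such an $x_0$ satisfying also $x_0\equiv 0\pmod{\pi^n}$, so that $\rho_u$ specializes to a lift of $\rho_n$. One first verifies, by revisiting the nearly ordinary local computation in Theorem \ref{localatp}, that the extra free direction in $N_p$ responsible for the variable $X$ genuinely perturbs the inertial weight at $p$, rather than inducing only an unramified twist; this uses the explicit identification in that proof between nearly ordinary fixed-determinant lifts and ordinary arbitrary-determinant lifts. Granted this, $\kappa_1$ is a non-constant element of $R_u = W(\F)[[X]]$. For any integer $a$ congruent to $\kappa_1(0)$ modulo $\pi^n$ and of sufficiently large absolute value, a Weierstrass preparation (or direct Hensel) argument then solves $\kappa_1(x_0)=a$ with $x_0 \equiv 0\pmod{\pi^n}$; the determinant relation $\psi_1\psi_2 = \det\rho_n$ then automatically expresses $\psi_2(x_0)$ in the same cyclotomic-times-finite-order form.

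Setting $\rho := \rho_u\bmod (X-x_0)$ yields a lift of $\rho_n$ that is ordinary at $p$ with integer Hodge-Tate-Sen weights and that meets all the prescribed local conditions at $P\cup Q\cup T$. The big image and oddness hypotheses force $\rrho$ to be odd with $\SL_2(\F)\subseteq \Img(\rrho)$, so $\rrho$ is modular by Serre's conjecture (Khare-Wintenberger). Ordinary modularity lifting (Skinner-Wiles and its refinements) applied to $\rho$ then promotes this to modularity of $\rho$ itself, which is what the theorem asserts. The main obstacle is step three: proving that the weight function $\kappa_1$ really moves non-trivially with $X$, and then producing an integer value of $\kappa_1$ within the fiber over $\rho_n$, both of which require a careful examination of how the universal parameter $X$ interacts with the local $p$-adic structure of the nearly ordinary family.
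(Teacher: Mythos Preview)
Your overall strategy matches the paper's: apply Theorem~\ref{abstract2} to get the nearly ordinary family over $R_u\simeq W(\F)[[X]]$, locate inside it a specialization that is genuinely ordinary with integral Hodge--Tate weight, and then invoke Skinner--Wiles. The divergence, and the real gap, is in how you carry out the middle step.

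You write that ``the extra free direction in $N_p$ responsible for the variable $X$ genuinely perturbs the inertial weight.'' This conflates a local direction with the global one. The tangent space of $R_u$ is the global Selmer group $\langle f\rangle$ (Proposition~\ref{sobre}), and $f$ is by construction the cocycle attached to $\rho_n\bmod\pi^2$. Since $\rho_n$ is \emph{ordinary} at $p$ by hypothesis, $f|_{G_p}$ lands in the ordinary tangent subspace of $N_p$, not in the extra nearly-ordinary direction; with determinant fixed, an ordinary first-order deformation leaves both $\psi_i|_{I_p}$ unchanged. Thus $\kappa_1$ has vanishing first derivative at the closed point, and nothing you have written rules out $\kappa_1$ being constant, let alone guarantees that $\kappa_1(x_0)=a$ is solvable with $x_0\in\pi^n\Om$. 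A genuine higher-order analysis would be needed here, and you have not supplied one.

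The paper sidesteps this entirely. It twists each specialization $\rho_\gamma$ by $\psi_2^{-1}\chi^{b-k}$ to obtain an \emph{ordinary} lift $\tilde\rho_\gamma$ of $\bar\rho$ with varying determinant, and then uses \cite{dtaylor} to exhibit one classical ordinary modular lift of $\bar\rho$, thereby identifying the twisted family with a piece of the associated Hida family $\mathcal{H}$. Because $\rho_n$ has several distinct lifts to $\Om$, the resulting analytic map $\Omega:\Hom(R_u,\Om)\to\mathcal{H}$ is non-constant and hence open; composing with the finite weight map $w:\mathcal{H}\to W$ gives an open subset of weight space, which automatically contains classical weights. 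It is precisely the finiteness of $w$ on the Hida family that replaces your missing non-constancy argument for $\kappa_1$; once a classical weight is hit, \cite{sw} finishes the proof just as in your last paragraph.
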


\begin{proof}
  Theorem \ref{abstract2} implies the existence of an universal ring
  $R_u \simeq \Z_p[[X]]$ that parametrizes nearly ordinary
  deformations with fixed determinant $\omega\chi^k$ and satisfying
  certain local conditions at the primes of $P \cup Q$.

 Observe that for each morphism of local $\Z_p$-algebras $\gamma:R_u \to \Om$ we have a nearly ordinary deformation $\rho_\gamma$ with
 coefficients in $\Om$. If we look at the local structure at $p$ of this deformation we find that 
 $$\rho_\gamma|_{I_p} \simeq \begin{pmatrix}
                             \omega_1\psi_1\chi^b&*\\
                             0&\omega_2\psi_2\chi^{k-b}
                            \end{pmatrix}$$ for $\omega_1$, $\omega_2$ unramified characters, $\psi_1$ and $\psi_2$ of finite order
                            and $b \in \Z_p$ (using that any $p$-adic character
 can be written as a power of the cyclotomic character times a character of finite order times an unramified character). 
 Twisting $\rho_\gamma$ by $\psi_2^{-1}\chi^{b-k}$ we get, for each
 $\gamma \in \Hom_{loc-\Z_p}(R_u,\Om)$ an ordinary deformation of $\rrho$. Denote the representation 
 $\psi_2^{-1}\chi^{b-k}\rho_\gamma$ by $\trho_\gamma$.

 As we are asking $\rrho$ to be modular, the work of \cite{dtaylor}
 ensures that $R_u$ contains a twist of characteristic zero modular
 points. This is, there is a morphism $\gamma_k: R_u \to
 \overline{\Q_p}$ such that the twisted deformation $\trho_{\gamma_k}$
 is ordinary of weight $k$ (and therefore modular). Specifically,
 \cite{dtaylor} guarantees that $\rrho$ has an ordinary lift of
 classical weight and arbitary determinant. After twisting this lift
 by the corresponding power of the cyclotomic character, it lies in
 our family of deformations with fixed determinant. This implies that
 the family of representations that we have constructed is part of the
 Hida family of $\trho_{\gamma_k}$. Let $\mathcal{H}$ be this Hida
 family, then there is a morphism $\Omega: \Hom_{loc-\Z_p}(R_u,\Om)
 \to \mathcal{H}$. As $\rho_n$ admits different lifts to $\Om$, this
 morphism is not constant.  Recall this Hida family is equipped with
 his corresponding weight map $w:H\to W$.
 
 On the other hand, we have a morphism $\theta:\Z_p[X] \to \Om/\pi^n$ which induces $\rho_n$. We know that any morphism 
 $\tilde{\theta}: \Z_p[[X]] \to \Om$ that lifts $\theta$ induces a lift $\rho_{\tilde{\theta}}$ of $\rho_n$ to $\Om$. But 
 $\Hom_{loc-\Z_p}(\Z_p[[X]],\Om) \simeq \mathcal{M}_\Om$ (as defining such a morphism ammounts to choosing an element of the maximal
 ideal of $\Om$ where to send $X$) and the set of morphisms $\tilde{\theta}$ that lift $\theta$ correspond to an open set 
 $\mathcal{U} \subseteq \mathcal{M}_{\Om}$ under this identification. 
 
 Overall, if we restrict the previously constructed morphism we have
 $\Omega|_{\mathcal{U}}:\mathcal{U} \to \mathcal{H}$. The image of
 this morphism must be an open set (as $\Omega$ is analytic), and if
 we compose it with the weight map we get an open set
 $w\circ\Omega(\mathcal{U}) \subseteq W$.  This open set necessarily
 contains a classical point, and any pre-image of this classical point
 gives raise to an ordinary lift of $\rho_n$ of integer weight which
 we call $\rho$. Finally, by the main theorem of \cite{sw}, $\rho$ is
 modular.
\end{proof}

\begin{coro}
\label{cor}
 Let $\rho_n$ in the same hyphoteses as in Theorem \ref{modular}. Then there exists a finite set of primes $Q$ and a continuous representation 
 $\rho: G_{P\cup Q} \to \GL_2(\Om)$ satisfying the consequences of Theorem \ref{modular} which also is of weight $2$.
\end{coro}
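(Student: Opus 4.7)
The plan is to mirror the proof of Theorem \ref{modular} while arranging that the classical modular point produced lies at weight exactly 2. The essential observation is that inside a Hida family the weight is determined by the determinant, so if we fix the determinant of our deformation problem to match that of a weight-2 form, any classical point we find is automatically of weight 2.

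Concretely, I would first replace $\rho_n$ by a twist $\rho_n \otimes \eta$, choosing the character $\eta$ so that $\det(\rho_n \otimes \eta) \equiv \chi\psi \pmod{\pi^n}$ for some finite-order character $\psi$; such $\eta$ exists because any continuous character $G_\Q \to \Om^\times$ factors as an integral power of $\chi$ times a character of finite order, so one may shift the $\chi$-exponent in $\det\rho_n$ to the value $1$. Then I would apply Theorem \ref{abstract2} with fixed determinant $\chi\psi$ and the correspondingly twisted local conditions, producing a universal ring $R_u \simeq W(\F)[[X]]$ whose characteristic-zero points parametrize nearly ordinary lifts sitting inside a single weight-2 slice of the ambient Hida family.

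Following section 5 of \cite{KR}, one then shows that this slice contains a classical modular point: Diamond--Taylor supplies some ordinary classical lift of $\rrho$, and the refined geometric analysis of the nearly-ordinary deformation ring carried out in \cite{KR} produces a point compatible with the prescribed determinant. Skinner--Wiles then upgrades to full modularity for nearly-ordinary specializations in that slice, giving a weight-2 modular lift $\rho'$ of $\rho_n \otimes \eta$; untwisting by $\eta^{-1}$ yields the desired $\rho$. Because the twist is trivial on inertia at primes $v \in P$ up to $\GL_2(\Om)$-equivalence and preserves ordinariness at $p$, the remaining conclusions of Theorem \ref{modular} are inherited unchanged.

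The main obstacle is locating a classical point on this prescribed weight-2 slice. In Theorem \ref{modular} any classical point in the Hida family sufficed, and a simple analyticity/openness argument for the weight map delivered the conclusion; here one instead needs the more delicate analysis of the nearly-ordinary deformation ring from \cite{KR}, suitably adapted so as to respect the local conditions at $P \cup Q$ imposed by Theorem \ref{abstract2}. Once that step is in place, the remainder is a bookkeeping exercise combining the twist with the modularity lifting machinery.
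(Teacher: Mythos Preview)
Your approach is a significant detour compared with the paper's two-line argument. The paper simply applies Theorem~\ref{modular} as a black box to obtain \emph{some} modular characteristic-$0$ lift of $\rho_n$, and then feeds that lift into Section~5 of \cite{KR}, which (at the cost of enlarging the auxiliary set~$Q$) converts it into a weight-$2$ modular lift of $\rho_n$. There is no twisting, no re-running of the deformation construction with a modified determinant, and no need to locate a classical point on a prescribed weight-$2$ slice of the Hida family.

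Your route, by contrast, introduces genuine gaps. First, to arrange $\det(\rho_n\otimes\eta)\equiv\chi\psi\pmod{\pi^n}$ you need $\eta^2$ to equal a prescribed character of $G_\Q$, and extracting such a square root is not automatic; it may fail or force $\eta$ to ramify at primes you do not control. Second, the claim that the twist is ``trivial on inertia at primes $v\in P$ up to $\GL_2(\Om)$-equivalence'' is unsupported: a nontrivial $\eta|_{I_v}$ genuinely alters the inertial type, so after untwisting you have no reason to recover $\rho|_{I_v}\simeq\rho_v|_{I_v}$. Third, you still end up invoking Section~5 of \cite{KR} at the decisive step, but you are asking it to manufacture a classical point on a fixed-determinant slice of the deformation space, whereas the paper uses it in the direction it is actually stated --- taking an existing characteristic-$0$ lift as input and returning a weight-$2$ one. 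The paper's order of operations (first Theorem~\ref{modular}, then \cite{KR}) matches the input/output of that reference directly; yours does not, and the intermediate twisting only compounds the difficulty.
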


\begin{proof}
 Once we have a lift of $\rho_n$ to characteristic $0$ (which exists by Theorem \ref{modular}) the corollary follows from the results of Section $5$ of 
 \cite{KR}. This gives another lift of $\rho_n$ which may ramify at a bigger set of primes but is of weight $2$.
\end{proof}

\bibliographystyle{alpha}
\bibliography{biblio}

\end{document}